\newcommand{\n}{\nonumber}
\renewcommand{\hat}{\widehat}
\renewcommand{\bar}{\overline}
\newcommand{\rbra}[1]{\!\left( #1 \right)} %round brackets or parentheses
\newcommand{\cbra}[1]{\!\left\{ #1 \right\}} %curly brackets or braces
\newcommand{\sbra}[1]{\!\left[ #1 \right]} %square brackets or brackets
\newcommand{\bE}{\ensuremath{\mathbb{E}}}
\newcommand{\bR}{\ensuremath{\mathbb{R}}}
\theoremstyle{plain}
\theoremstyle{definition}
\numberwithin{equation}{section}
\renewcommand\section{\@startsection {section}{1}{\z@}%
                                   {-3.5ex \@plus -1ex \@minus -.2ex}%
                                   {2.3ex \@plus.2ex}%
                                   {\normalfont\large\bf}}
\renewcommand\subsection{\@startsection {subsection}{1}{\z@}%
                                   {-3.5ex \@plus -1ex \@minus -.2ex}%
                                   {2.3ex \@plus.2ex}%
                                   {\normalfont\normalsize\bf}}
\newcommand {\ud}{{\rm d}}
\numberwithin{equation}{section}
\newtheorem{theorem}{Theorem}[section]
\newtheorem{proposition}{Proposition}[section]
\newtheorem{remark}{Remark}[section]
\newtheorem{lemma}{Lemma}[section]
\numberwithin{remark}{section} \numberwithin{proposition}{section}
\numberwithin{corollary}{section}
\newcommand {\R}{\mathbb{R}}
\newcommand {\p}{\mathbb{P}}
\newcommand {\E}{\mathbb{E}}
\newcommand{\diff}{{\rm d}}
\newcommand{\lev}{L\'{e}vy }
\newcommand{\e}{\mathbb{E}}
\begin{document}

\begin{center}
{\Large \bf 
On optimal periodic dividend and capital injection strategies for spectrally negative \lev models
}
\end{center}
\begin{center}
Kei Noba, Jos\'e-Luis P\'erez, Kazutoshi Yamazaki and Kouji Yano
\end{center}
\begin{center}
{\small \today}
\end{center}

\begin{abstract}
De Finetti's optimal dividend problem has recently been extended to the case dividend payments can only be made at Poisson arrival times. This paper considers the version with bail-outs where the surplus must be nonnegative uniformly in time.  For a general spectrally negative \lev model, we show the optimality of a Parisian-classical reflection strategy that pays the excess above a given barrier at each Poisson arrival times and  also reflects from below at zero in the classical sense.

\vspace{0.1cm}
\noindent \small{\noindent  AMS 2010 Subject Classifications: 60G51, 93E20, 91B30\\
JEL Classifications: C44, C61, G24, G32, G35 \\
% \red{[added this. can you check?]} \\ %C44, C61, G24, G32, G35 \\
\textbf{Keywords:} dividends; capital injection; spectrally negative \lev processes; scale functions; periodic barrier strategies.}% \red{[added this. can you check?]}}
\end{abstract}

%%%%% text %%%%%

\section{Introduction}

In the bail-out model of de Finetti's dividend problem, a joint optimal dividend and capital injection strategy is pursued so as to maximize the total expected dividend payments  minus the costs of capital injections.  In the past decade, the classical Cram\'er-Lundberg model has been generalized to a spectrally negative \lev model.  In particular, Avram et al.\  \cite{AvrPalPis} showed the optimality of a double barrier strategy that reflects from below at zero and also from above at a certain barrier.

In this paper, we consider its extension with a periodic dividend constraint.  Periodic observation models have recently been studied widely in the insurance literature (as in, e.g., Albrecher et al.\ \cite{ABT, ACT}).  For the case without capital injections in which dividends are paid until the time of ruin, a periodic barrier strategy that pays any excess above a certain barrier at each payment opportunity is expected to be optimal.  Its optimality has been confirmed for the spectrally positive \lev (dual) models by Avanzi et al.\ \cite{ATW} and P\'erez and Yamazaki \cite{PerYam}, and for the spectrally negative \lev models with a completely monotone \lev density by 
 Noba et al.\ \cite{NobPerYamYan}.  On the other hand, regarding the bail-out case,  the optimality results are only available for the dual model given in the second problem considered in \cite{PerYam}, to the best of our knowledge.

The objective of this paper is to show the optimality of a \emph{periodic-classical barrier strategy} %\red{[maybe this is better than ``Parisian-classical"?]} 
under a general spectrally negative \lev model.  This can be seen as the bail-out extension of \cite{NobPerYamYan}  and also as the spectrally negative version of the bail-out model in \cite{PerYam}.  

We follow the \emph{guess and verify} procedure to tackle the problem.  Under a periodic-classical barrier strategy, the expected NPVs of dividends and capital injections can be written in terms of the scale function by the results given in \cite{PerYam2}.  The candidate optimal barrier is first chosen using the conjecture that the slope of the value function at the barrier becomes one. The optimality of the selected strategy is then confirmed by showing that the candidate value function solves the proper variational inequalities. This is indeed satisfied by the convexity of the candidate value function, that is shown by our observation that its slope becomes proportional to a certain ruin identity, 
%of the Parisian reflected \lev process of \cite{PerYam2}, 
which is monotone in the starting value of the process.
%We show these by observing that the slope of the candidate value function becomes proportional to the Laplace transform of the ruin time of a Parisian reflected \lev process of TODO with respect to the starting point of the process.

Regarding the comparison with the dual model \cite{PerYam}, there are both similarities and differences.  In this paper, we focus on the differences and omit similar results, such as the verification lemma, that can be attained similarly to \cite{PerYam}. 
In the dual model, only minimal modifications are necessary to solve the bail-out case from the case with ruin.  As shown in \cite{PerYam}, the value functions for both cases admit exactly the same expressions except that the optimal barriers are different.  
On the other hand, this is not expected in the spectrally negative \lev model.  The expressions of the optimal solutions are different, and we use different approaches to show the variational inequalities.
It is noted that in this paper we \emph{do not assume} the completely monotone density assumption which was needed in  \cite{NobPerYamYan}.

% the solution schemes are significantly different between the ruined and bail-out cases.  Interestingly, we do not need to assume any condition on the \lev measure while in \cite{NobPerYamYan} a completely monotone density assumption is needed for the ruined case.

%As is clear from the analysis required for the ruined case in the spectrally negative case \cite{NobPerYamYan} in comparison to the spectrally positive case \cite{PerYam}, for the bail-out case, the spectrally negative case is more challenging than the spectrally positive case. In \cite{PerYam}, it is shown that the value functions for the ruined and bail-out cases can be written exactly in the same way except that the optimal barriers are different.  These two cases can be solved essentially in the same way.  

The rest of the paper is organized as follows.  The considered problem is formulated and  a review of the spectrally negative L\'evy process is given in Section \ref{section_preliminaries}. In Section \ref{PR}, we define the periodic-classical barrier strategies and construct the corresponding surplus process. We also provide a review of the scale function and obtain the expected NPVs corresponding to these strategies. In Section \ref{sf_d} we obtain the optimal barrier for the periodic-classical strategy, and in Section \ref{VoO} we prove that the expected NPVs associated with this strategy solves the proper variational inequalities. Finally, in Section \ref{section_numerics}, we provide some numerical results.

\section{Preliminaries} \label{section_preliminaries}
\subsection{Spectrally negative \lev processes} %\red{[changed the sentences a bit as these are too close to the other paper.]}
Let $X=(X(t); t\geq 0)$ be defined on a  probability space $(\Omega, \mathcal{F}, \p)$, modeling the surplus of an insurance company in the absence of control.  For $x\in \R$, we denote by $\p_x$ the law of $X$ when it starts at $x$ and write for convenience  $\p$ in place of $\p_0$. Accordingly, we shall write $\e_x$ and $\e$ for the associated expectation operators. %\red{[added this because it is used later] 
%We let $(\mathcal{F}_t)_{t \geq 0}$ be the filtration generated by $X$. 
%In this paper, we shall assume throughout that $X$ is \textit{spectrally negative},   meaning here that it has no positive jumps and that it is not the negative of a subordinator. 

In this paper, we assume that $X$ is a spectrally negative \lev process that is not the negative of a subordinator, and its Laplace exponent  $\psi(\theta):[0,\infty) \to \R$ is such that
\[
\e\big[{\rm e}^{\theta X(t)}\big]=:{\rm e}^{\psi(\theta)t}, \qquad t, \theta\ge 0,
\]
given by the \emph{L\'evy-Khintchine formula} %\red{[changed from $x$ to $z$]}
\begin{equation}\label{lk}
	\psi(\theta):=\gamma\theta+\frac{\eta^2}{2}\theta^2+\int_{(-\infty,0)}\big({\rm e}^{\theta z}-1-\theta z \mathbf{1}_{\{z >-1\}}\big)\Pi(\ud z), \quad \theta \geq 0.
\end{equation}
%\blue{[Kazu: We are using $\nu^{\pi}$ for the dividend payments, change to $\eta$?]}
%\red{[Now that $\sigma$ is used for ruin time, how about $\nu$?]}\blue{[Kazu: How about changing the ruin time to $\eta$ or $\nu$ instead, to avoid confussion with classical notation?]}
Here, $\gamma\in \R$, $\eta \ge 0$, and $\Pi$ is a \lev measure on $(-\infty,0)$ such that
\[
\int_{(-\infty,0)}(1\land z^2)\Pi(\ud z)<\infty.
\]

The process $X$ has paths of bounded variation if and only if $\eta=0$ and $\int_{(-1, 0)} |z|\Pi(\mathrm{d}z)$ is finite. In this case $X$ can be written as
\begin{equation}
	X(t)=ct-S(t), \,\,\qquad t\geq 0,\notag
	%\label{BVSNLP}
\end{equation}
where 
\begin{align*}
	c:=\gamma-\int_{(-1,0)} z\Pi(\mathrm{d}z) %\label{def_drift_finite_var}
\end{align*}
and $(S(t); t\geq0)$ is a driftless subordinator. By the assumption that it does not have monotone paths, we must have  $c>0$ and we can write%, since we have ruled out the case that $X$ has monotone paths; 
\begin{equation*}
	\psi(\theta) = c \theta+\int_{(-\infty,0)}\big( {\rm e}^{\theta z}-1\big)\Pi(\ud z), \quad \theta \geq 0. %\label{psi_bounded_var}
\end{equation*}

%so that the problem considered below will have nontrivial solutions.
%\subsection{Formulation of the problems.}{\color{blue} Working title.}
\subsection{
%The optimal dividend problem with Poissonian dividend-decision times with classical capital injection \red{[I think it is too long. How about just ``The problem''?]}\blue{[How about: "
The optimal Poissonian dividend problem with classical capital injection}\label{strategy} 
%\red{[rewrote this subsection. Can you check?]}

A (dividend/capital injection) strategy  is a pair of processes $\pi := \left( L^{\pi}(t), R^{\pi}(t); t \geq 0 \right)$  consisting of the cumulative amount of dividends $L^{\pi}$ and those of capital injection $R^{\pi}$.
%of nondecreasing, right-continuous, and $\mathbb{F}$-adapted processes where $L^{\pi}$ is the cumulative amount of dividends and $R^{\pi}$ is that of injected capital. 
\par %\red{[shortened a bit here.]} 
Regarding the dividend strategy, we assume that the dividend payments can only be made at the arrival times $\mathcal{T}_r :=(T(i); i\geq 1 )$ of a Poisson process $N^r=( N^r(t); t\geq 0) $ with intensity $r>0$, which is independent of the L\'evy process $X$.  %\red{[moved this sentence here]}
%The set of dividend-decision times is denoted by $\mathcal{T}_r :=(T(i); i\geq 1 )$, where $T(i)$, for each $i\geq 1$, represents the $i^{\textrm{th}}$ arrival time of the Poisson process $N^r$. 
In other words, $T(i)-T(i-1)$, $i\geq 1$ (with $T(0) := 0$) are independent and exponentially distributed with mean $1/r$.  
More precisely,  $L^{\pi}$ admits the form
\begin{align}
L^{\pi}(t)=\int_{[0,t]}\nu^{\pi}(s)\diff N^r(s),\qquad\text{$t\geq0$,} \label{restriction_poisson}
\end{align}
for some c\`agl\`ad process $\nu^{\pi}$ adapted to the filtration $\mathbb{F} := (\mathcal{F}(t); t \geq 0)$ generated by the processes $(X, N^r)$.

Regarding the capital injection, we assume that $R^{\pi}$ is a  nondecreasing, right-continuous, and $\mathbb{F}$-adapted process with $R^{\pi}(0-) = 0$. Contrary to the dividend payments, capital injection can be made continuously. 

The corresponding risk process is given by $U^{\pi}(0-) = X(0)$ and 
\begin{align*}
	U^{\pi}(t) := X(t) - L^{\pi}(t) + R^{\pi}(t), \quad t \geq 0,
\end{align*}
and $(L^{\pi}, R^{\pi})$ must be chosen so that $U^{\pi}(t) \geq 0$ for all $t \geq 0$ a.s.

%\red{[remove this sentence?]} Here, for each $t\geq0$, $\nu^\pi(t)$ represents the dividend payment at time $t$ (if $\Delta N^r (t) > 0$) associated with the strategy $\pi$. In particular, the dividend payment at time $T(i)$  is given by $\nu^\pi(T(i))$ for each $i\geq 1$. 

Assuming that $\beta > 1$ is the cost per unit injected capital and $q > 0$ is the discount factor, the objective is to maximize
%we want to maximize %\red{change to $u_{\pi}$ or $w_{\pi}$?}
\begin{align} \label{v_pi}
v_{\pi} (x) := \mathbb{E}_x \left( \int_{[0, \infty)} e^{-q t} \diff L^{\pi}(t) - \beta \int_{[0, \infty)} e^{-q t} \diff R^{\pi}(t)\right), \quad x\geq 0,
\end{align}
%\blue{[Kazu: The previous comment is to follow Avram etal. for the bail out problem or the periodic SP case, consider $x\geq0$ instead of $x\in\bR$ just to be safe?]}
%\red{[moved back here again because we need $q$]
over  the set of all admissible strategies $\mathcal{A}$ that satisfy all the constraints described above and 
\begin{align}
\E_x\left(\int_{[0, \infty)} e^{-qt} \diff R^{\pi}(t)\right) < \infty. %\quad a.s.
\label{admissibility2}
\end{align}
Hence the problem is to compute the value function
\begin{equation}\label{control:value}
v(x):=\sup_{\pi \in \mathcal{A}}v_{\pi}(x), \quad x \geq 0,
\end{equation}
and obtain an optimal strategy $\pi^*$ that attains it, if such a strategy exists.
Throughout the paper, for the solution to be nontrivial, we assume
%$For the rest of the paper, we assume that
\begin{align}\label{mean-finite}
	\E [X(1)] =  \psi'(0+) >- \infty.
\end{align}

\section{%L\'evy processes Reflected at Poissonian times
	Periodic-classical barrier strategies}\label{PR}
	%\red{[let us later make it more concise as it is the same as the SP paper.]}
%\blue{[Shorthened a bit.]} 
As in the spectrally positive case \cite{PerYam}, the objective of this paper is to
%We want to 
show the optimality of the periodic-classical barrier strategy 
\[
\bar{\pi}^{0,b} := \{(L_r^{0,b}(t), R_r^{0,b}(t)); t \geq 0 \}.
\]
 %\red{in the following, we are using $\pi^b$. How about just $\pi^b$?} 
%with classical reflection (capital injection) below at $0$ and Parisian reflection at a suitable level $b \geq 0$. In other words, any excess above $b$ is paid at dividend-decision times $\mathcal{T}_r$ whenever the surplus process is above $b$, while it is pushed up by capital injection whenever it attempts to down-cross zero.  
The controlled process $U_r^{0,b}$ becomes the \textit{L\'evy process with Parisian reflection above and classical reflection below} considered in \cite{PerYam}, which can be constructed as follows.

%\red{[added this]}
%We construct the process $U_r^{0,b}$ as follows.  
Let  $R (t):= (-\inf_{0 \leq s \leq t} X(s)) \vee 0$ for $t\geq 0$, then we have 
\begin{align*}
U_r^{0,b}(t) = X(t) + R(t), \quad 0 \leq t < \widehat{T}_b^{+} (1) 
\end{align*}
where $\widehat{T}_b^{+}(1) := \inf\{T(i):\; X(T(i))+R(T(i)) > b\}$.
The process then jumps down by $
%{\color{red}L_r^{0, b}(\widehat{T}_b^{+}(1))=}
X(\widehat{T}_b^{+}(1))+R(\widehat{T}_b^{+}(1))-b$ so that $U_r^{0,b}(\widehat{T}_b^{+}(1)) = b$. For $\widehat{T}_b^{+}(1) \leq t < \widehat{T}_b^{+}(2)  := \inf\{T(i) > \widehat{T}_b^{+}(1):\; U_r^{0,b}(T(i) -) > b\}$, $U_r^{0,b}(t)$ is the process reflected  at $0$ of  the process $( X(t) - X(\widehat{T}_b^{+}(1)) +b; t \geq \widehat{T}_b^+(1) )$. %$X(t) + |Y^b(\widehat{T}_0^-(1))|$ \green{[this is probably not correct -- should be $X(t) - X(\widehat{T}_0^-(1))$?]}.
The process $U_r^{0,b}$ can be constructed by repeating this procedure.
It is clear that it admits a decomposition
\begin{align*}
U_r^{0,b}(t) = X(t) - L_r^{0,b}(t) + R_r^{0,b}(t), \quad t \geq 0,
\end{align*}
%\red{[How about something like $U_r^{0,b}(t)$ instead of $Y_t^{r,b}$?]}
where $L_r^{0,b}(t)$ and $R_r^{0,b}(t)$ are, respectively, the cumulative amounts of Parisian and classical reflection until time $t$. %\red{JL: How about writing $L_r^{0,b}$ and $R_r^{0,b}$?}

We will see that the strategy $\bar{\pi}^{0,b} := \{(L_r^{0,b}(t), R_r^{0,b}(t)); t \geq 0 \}$, for $b \geq 0$, is admissible for the  problem described in Section \ref{strategy} (because \eqref{admissibility2} holds by Lemma \ref{lemma_NPV} and our assumption \eqref{mean-finite}). Its expected NPV of dividends minus the costs of capital injection is denoted by 
\begin{align} \label{v_pi}
v_b(x) := \mathbb{E}_x \left( \int_{[0, \infty)} e^{-q t} \diff L_r^{0,b}(t) - \beta \int_{[0, \infty)} e^{-q t} \diff R^{0,b}_r(t)\right), \quad x\geq 0.
\end{align}
%\red{Here we extend its domain to $\R$ by setting $v_b(x) = \beta x + v_b(0)$ for $x < 0$, corresponding to the NPV }

%\red{[delete this?]}The corresponding stochastic control problem is defined by
%	v(x):=\sup_{\pi \in \mathcal{A}}v_{\pi}(x), \quad x \in \mathbb{R},
%\end{equation}
%where $\mathcal{A}$ is the set of all admissible strategies  that satisfy the constraints described above.

\subsection{Scale functions} %\red{[changed to subsection]}
%\red{[changed a bit as it is close to the existing paper]} 
For fixed $q \geq 0$, let $W^{(q)}: \R \to [0 , \infty)$ be the scale function of the spectrally negative 
L\'evy process $X$. 
This takes the value zero on the 
negative half-line, and on the positive half-line it is a continuous and strictly 
increasing function  defined by its Laplace transform: 
\begin{align}\label{scale_function_laplace}
\int_0^\infty e^{-\theta x} W^{(q)} (x) \diff x 
= \frac{1}{\psi (\theta ) -q} , ~~\theta > \Phi (q), 
\end{align}
where $\psi$ is as defined in \eqref{lk} and
\begin{align}
\begin{split}
\Phi(q) := \sup \{ \lambda \geq 0: \psi(\lambda) = q\} . 
\end{split}
\label{def_varphi}
\end{align}
We also define, for all $x \in \R$, 
\begin{align*}
&{\bar{W}}^{(q)} (x) := \int_0^x W^{(q)} (y) \diff y,  ~~~
{\bar{\bar{W}}}^{(q)} (x) : = \int_0^x \int_0^z W^{(q)} (w)\diff w \diff z, \\
&Z^{(q)} (x) := 1+ q\bar{W}^{(q)}(x), ~~~
\bar{Z}^{(q)} (x) := \int_0^x Z^{(q)} (z) \diff z=x +q\bar{\bar{W}}^{(q)} (x) . 
\end{align*}
		Because $W^{(q)}(x) = 0$ for $-\infty < x < 0$, we have
		\begin{align*}
		\overline{W}^{(q)}(x) = 0,\quad \overline{\overline{W}}^{(q)}(x) = 0,\quad Z^{(q)}(x) = 1,
		\quad \textrm{and} \quad \overline{Z}^{(q)}(x) = x, \quad x \leq 0.  %\label{z_below_zero}
		\end{align*}
		%\red{[JL: remoe the second one?  It is I guess not used anywhere.]}
		%\red{[moved the second line here]}
		%that can be obtained by performing an integration of identity (\ref{RLqp}).
		%	which can be proven by showing that the Laplace transforms on both sides are equal.
		
		%\red{[added this.  Used later.]
		%\red{[should we change from $\tau$ to $T$ or something as $\tau$ is used for the ruin time?]}\blue{[$T$ is related to the Poissonian reflection times, so maybe something else?]} \red{[How about $\sigma$ for $U^\pi$ and keep $\tau$ for $X$?]}
		%\red{[need this?]}If we define $\tau_0^- := \inf \left\{ t \geq 0: X(t) < 0 \right\}$ and $\tau_b^+ := \inf \left\{ t \geq 0: X(t) >  b \right\}$ for any $b > 0$,
		%then
		%\begin{align}
		%\begin{split}
		%\E_x \left( e^{-q \tau_b^+} 1_{\left\{ \tau_b^+ < \tau_0^- \right\}}\right) &= \frac {W^{(q)}(x)}  {W^{(q)}(b)}, \\
		%\E_x \left( e^{-q \tau_0^-} 1_{\left\{ \tau_b^+ > \tau_0^- \right\}}\right) &= Z^{(q)}(x) -  Z^{(q)}(b) \frac {W^{(q)}(x)}  {W^{(q)}(b)}.
		%\end{split}
		%\label{laplace_in_terms_of_z}
		%\end{align}
		
		%\begin{remark} \label{remark_smoothness_zero}
			%\begin{enumerate}
			%	\item If $X$ is of unbounded variation or the \lev measure is atomless, it is known that $W^{(q)}$ is $C^1(\R \backslash \{0\})$; see, e.g.,\ \cite[Theorem 3]{Chan2011}. \red{[remove this item? It is not needed for this paper. Only the continuity of $W$ will do.]}
			%\item 

			\begin{remark} \label{remark_scale_function_properties}
				\begin{enumerate}
					\item[(1)] $W^{(q)}$ is differentiable a.e. In particular, if $X$ is of unbounded variation or the \lev measure does not have an atom, it is known that $W^{(q)}$ is $C^1(\R \backslash \{0\})$; see, e.g.,\ \cite[Theorem 3]{Chan2011}.
					\item[(2)] As in Lemma 3.1 of \cite{KKR},
					\begin{align*}%\label{eq:Wqp0}
					\begin{split}
					W^{(q)} (0) &= \left\{ \begin{array}{ll} 0 & \textrm{if $X$ is of unbounded
						variation,} \\ c^{-1} & \textrm{if $X$ is of bounded variation.}
					\end{array} \right.  
					%\\
					%W^{(q)'} (0+) &:= \lim_{x \downarrow 0}W^{(q)'} (x) =
					%\left\{ \begin{array}{ll}  \frac 2 {\sigma^2} & \textrm{if }\sigma > 0, \\
					%\infty & \textrm{if }\sigma = 0 \; \textrm{and} \; \nu(-\infty,0) = \infty, \\
					%\frac {q + \nu(-\infty, 0)} {c^2} &  \textrm{if }\sigma = 0 \; \textrm{and} \; \nu(-\infty, 0) < \infty.
					%\end{array} \right.
					\end{split}
					\end{align*}
							%	\red{[I guess we can remove this?]}On the other hand, as in Lemma 3.3 of \cite{KKR},
							%	\begin{align}
							%	\begin{split}
							%	e^{-\Phi(q) x}W^{(q)} (x) \nearrow \psi'(\Phi(q))^{-1}, \quad \textrm{as } x \uparrow \infty.
							%	\end{split}
							%	\label{W_q_limit}
							%	\end{align}
				\end{enumerate}
			\end{remark}

		We also use %the scale function 
$W^{(q+r)}$ and $\Phi(q+r)$, which are defined by \eqref{scale_function_laplace} and \eqref{def_varphi} with $q$ replaced by $q+r$. 
By the convexity of $\psi$ on $(0,\infty)$,
% Note that 
		%\begin{align*}
		we have $\Phi(q+r) > \Phi(q)$ for $r > 0$, %[decided not to display the previous equation because it is not so important.]} %\label{big_phi_monotonicity}
		%\end{align*}
		%\red{[the following is not used. remove?]}
		and, from the identity (5) in \cite{LRZ}, 
		\begin{align*}
		W^{(q+r)}(x)-W^{(q)}(x)=r\int_0^xW^{(q+r)}(u)W^{(q)}(x-u) \diff u, \quad x \in \R. %\label{W_q_W_q_r_relation}
		\end{align*}
		
We also define, for $q, r > 0$ and $x \in \R$,
		\begin{align*}
		Z^{(q)}(x,\Phi(q+r)) &:=e^{\Phi(q+r) x} \left( 1 -r \int_0^{x} e^{-\Phi(q+r) z} W^{(q)}(z) \diff z	\right) \notag \\
		&=r \int_0^{\infty} e^{-\Phi(q+r) z} W^{(q)}(z+x) \diff z	 > 0. %\label{z1}
\end{align*}
		Here, the second equality holds because \eqref{scale_function_laplace} gives $\int_0^\infty  \mathrm{e}^{-\Phi(q+r) x} W^{(q)}(x) \diff x = r^{-1}$.
		By differentiating this with respect to the first argument,
		\begin{align*}
		Z^{(q) \prime}(x,\Phi(q+r)) &:= \frac \partial {\partial x}Z^{(q)}(x,\Phi(q+r))  = \Phi(q+r) Z^{(q)}(x,\Phi(q+r))	- r W^{(q)}(x), \quad x > 0. %\label{Z_prime_Phi}
		\end{align*}

Finally, for $b\geq 0$ and $x \in \bR$,  we define
%\blue{[Moved the identities here, becasue I added $W_{-b}^{(q, r)}$ and we need it for Remark 3.2, ok?]}
\begin{align}
\begin{split}
W_{-b}^{(q, r)} (x ) 
&:= W^{(q)}(x+ b) + r \int_0^{x } W^{(q + r)} (y)W^{(q)} (x - y +b) \diff y,\\
Z_{-b}^{(q, r)} (x ) 
&:= Z^{(q)}(x+ b) + r \int_0^{x } W^{(q + r)} (y) Z^{(q)} (x - y +b) \diff y, 
\\
\overline{Z}_{-b}^{(q, r)} (x ) 
&:= \overline{Z}^{(q)}(x+ b) + r \int_0^{x } W^{(q + r)} (y) \overline{Z}^{(q)} (x - y +b) \diff y. 
\end{split}
\label{identities} 
\end{align}
%\blue{[Added this Remark for the proof of Lemma 6.2, ok?]}
\begin{remark}\label{identities_LRZ}
	Using the identities given in (5) of \cite{LRZ}, we have
	\begin{align*}
	W_{0}^{(q, r)} (x)=W^{(q+r)}(x) \quad\text{and}\quad Z_{0}^{(q, r)} (x)=Z^{(q+r)}(x), \quad x \in \R.
	\end{align*}
\end{remark}

%\red{[Moved this here so that we see easier why we need the notations above. OK?]}
\begin{remark}
Fix $b \geq 0$. Let
	$X_{r}$ be the Parisian reflected process of $X$ from above at the level $0$ (without classical reflection) as studied in \cite{PerYam2}, and 
\[
\tau_{-b}^-(r):=\inf\{t>0: X_r(t)<-b \}.
\]
Then, by Corollary 3.3 in \cite{PerYam2}, for any $x\in\mathbb{R}$, 
%\red{[simplified the expression below. could you check?]}
\begin{align}\label{403}
\mathbb{E}_{x-b}&\Big[e^{-\tau_{-b}^-(r)}\Big]%&%=
%J_{-b}^{(q,r)}(x)-qI_{-b}^{(q,r)}(x)\frac{Z^{(q)}(b,\Phi(q+r))}{Z^{(q)\prime}(b,\Phi(q+r))}W^{(q)}(b)\notag\\
=Z_{-b}^{(q,r)}(x-b)-rZ^{(q)}(b)\overline{W}^{(q+r)}(x-b)\notag\\&-q\frac{Z^{(q)}(b,\Phi(q+r))}{Z^{(q)\prime}(b,\Phi(q+r))}\left(W^{(q,r)}_{-b}(x-b)-rW^{(q)}(b) \overline{W}^{(q+r)}(x-b)\right),
\end{align}
%\red{[Should we write for $\mathbb{E}_{x-b}\Big[e^{-\tau_{-b}^-(r)}\Big]$ instead?]}
%\red{[I guess $I_{-b}^{(q,r)}(x)$ is not defined. We can just remove the first equality?]}
where in particular
\begin{align}\label{fpt_pr}
\mathbb{E}_0\left[e^{-\tau_{-b}^-(r)}\right]
&=Z^{(q)}(b) -q\frac{Z^{(q)}(b,\Phi(q+r))}{Z^{(q)\prime}(b,\Phi(q+r))}W^{(q)}(b).\end{align}
These identities are used later in Remark \ref{remark_g_probabilistic} and the proof of Lemma \ref{cond_b^*}.
\end{remark}

For a comprehensive study on the scale function and its applications, see \cite{KKR,Kyp}.

%\red{[$l_{-b}^{(q, r)}$ is only used in $H_{-b}^{(q , r)}$.  Avoid this shorthand?]}
%\blue{[Kazu:It helps a bit to simplify the proof of Lemma 6.2. But if you think we should avoid it I am ok with it.]} \red{I don't think it is helping much in Lemma 6.2 (only one line). So avoid? Also, maybe $\tilde{Z}^{(q, r)}(x)$ can be avoided? When it appears, it is often divided by $Z^{(q)} (x , \Phi (q+r))$ and so I don't know if it simplifies.}
\subsection{Expression of $v_b$ via the scale function} 
%\red{I guess this section should be connected to the previous section?  It is odd that the previous section ends with the review of the scale function even though its section title is the periodic barrier strategies.}\blue{Made it a subsection is it ok?}

%We assume the same conditions in the following sections. 
%\par

%\red{[revised this part to make it a bit more concise.]}
%In this section, we
% We write \eqref{v_pi} via the scale function.
 %\red{[ok to remove these definitions below?]}
%Define, for all $b \geq 0$ and $x \in \R$,
%\begin{align}
%&v_b^L(x) := \bE_x \rbra{\int_{[0 , \infty)} e^{-qt} dL_r^{0 , b} (t)} \qquad\text{and}
%&v_b^R (x) :=  \bE_x \rbra{\int_{[0 , \infty)} e^{-qt} dR_r^{0 , b} (t)}. \label{def_V_L_R}
%\end{align}

%\red{[moved the following here.]} 

%\red{Sorry, do you think we should move back the shorthand notation $W_{-b}^{(q, r)} (x )$?  Looks this appears a lot below.}
%\blue{[Maybe we can now remove $J^{(q, r)}_{-b}$, $l^{(q)}$ and $H_{-b}^{(q , r)}$?]}
Using the functions given in \eqref{identities}, the expression \eqref{v_pi} can be computed immediately by Corollaries 4.4 and 4.5 in \cite{PerYam2} via the scale function. Recall our assumption \eqref{admissibility2} that $\psi'(0+)$ is finite.
Below,  we extend the domain of $v_b$ to $\R$ by setting $v_b(x) = \beta x + v_b(0)$ for $x < 0$, so as to include the case when the process is started at a negative value and is pushed up to zero immediately.

%\red{[changed as follows to show the combined result in the lemma.]}
\begin{lemma}\label{lemma_NPV}
For $b\geq0$ and $x \in \R$,
\begin{align}\label{vf_new_label}
v_b(x)&=-C_{b}\left(Z_{-b}^{(q,r)}(x-b)-rZ^{(q)}(b)\overline{W}^{(q+r)}(x-b)\right)-r\overline{\overline{W}}^{(q+r)}(x-b)\notag\\
&+\beta\Big( 
\overline{Z}_{-b}^{(q, r)} (x -b)
%\bar{Z}^{(q)} (x)
+\frac{\psi^\prime(0+)}{ q}  -r \overline{Z}^{(q)}(b)\overline{W}^{(q+r)}(x-b)
%+ r\int_0^{x-b} W^{(q + r)}(y)
%\bar{Z}^{(q)} (x-y) \diff y 
\Big),
\end{align}
where %\blue{[Kazu: Changed $\tilde{C}_b$ to $C_b$.]}
\begin{equation}\label{C_b}
%C_{b}:=\frac{r(\beta Z^{(q)} (b)-1) + q \beta Z^{(q)} (b , \Phi (q+r))}{q\Phi(q+r)Z^{(q)}(b,\Phi(q+r))}.
%\end{equation}
%\red{Do you think it is better to write?
C_{b}:=\frac{r(\beta Z^{(q)} (b)-1)}{q\Phi(q+r)Z^{(q)}(b,\Phi(q+r))} + \frac {\beta} {\Phi(q+r)}. 
\end{equation}
%}
In particular, for $x\leq b$, we obtain that
\begin{align}\label{vf_new_label_x<b}
v_{b}(x)
=-C_{b}Z^{(q)}(x)+\beta \Big( \bar{Z}^{(q)} (x)+\frac{\psi^\prime(0+)}{ q}\Big).
\end{align}
\end{lemma}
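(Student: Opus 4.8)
The plan is to decompose $v_b$ into its dividend and capital-injection parts and evaluate each by the fluctuation identities of \cite{PerYam2}. Writing $v_b(x) = \mathbb{E}_x\big[\int_{[0,\infty)} e^{-qt}\diff L_r^{0,b}(t)\big] - \beta\,\mathbb{E}_x\big[\int_{[0,\infty)} e^{-qt}\diff R_r^{0,b}(t)\big]$, I would use that $U_r^{0,b}$ is precisely the process with Parisian reflection above at $b$ and classical reflection below at $0$, whose discounted dividend (Parisian reflection) and discounted capital-injection (classical reflection) functionals are computed through the functions in \eqref{identities}. The expected NPV of dividends is then read off from Corollary 4.4 of \cite{PerYam2}, and the expected NPV of capital injection from Corollary 4.5.

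First I would record the dividend NPV, which contributes the block $Z_{-b}^{(q,r)}(x-b) - rZ^{(q)}(b)\overline{W}^{(q+r)}(x-b)$ — the same building block appearing in \eqref{403} — together with the term $-r\overline{\overline{W}}^{(q+r)}(x-b)$, the coefficient being governed by the ratio $Z^{(q)}(b,\Phi(q+r))/Z^{(q)\prime}(b,\Phi(q+r))$. Next I would record the capital-injection NPV, which contributes $\overline{Z}_{-b}^{(q,r)}(x-b)$, the correction $-r\overline{Z}^{(q)}(b)\overline{W}^{(q+r)}(x-b)$, and the constant $\psi'(0+)/q$; the finiteness of this constant, and hence of the injection cost and of \eqref{admissibility2}, is exactly what assumption \eqref{mean-finite} secures.

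The remaining work is algebraic. After multiplying the injection NPV by $-\beta$ and adding, I would collect the coefficients of $Z_{-b}^{(q,r)}(x-b)$ and of $rZ^{(q)}(b)\overline{W}^{(q+r)}(x-b)$ from both parts into the single coefficient $-C_b$, and verify that it reduces to the stated $C_b = \frac{r(\beta Z^{(q)}(b)-1)}{q\Phi(q+r)Z^{(q)}(b,\Phi(q+r))} + \frac{\beta}{\Phi(q+r)}$. I expect this reduction to be the principal obstacle, since the numerator $\beta Z^{(q)}(b)-1$ visibly mixes the dividend ($-1$) and injection ($\beta Z^{(q)}(b)$) contributions; carrying it out cleanly should rely on the differentiation identity $Z^{(q)\prime}(x,\Phi(q+r)) = \Phi(q+r)Z^{(q)}(x,\Phi(q+r)) - rW^{(q)}(x)$ recorded above to eliminate $Z^{(q)\prime}(b,\Phi(q+r))$ from the denominators.

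Finally, for $x \le b$, i.e. $x - b \le 0$, the vanishing of $W^{(q)}$, $\overline{W}^{(q)}$ and $\overline{\overline{W}}^{(q)}$ on $(-\infty,0]$ collapses every Parisian-correction integral in \eqref{identities}: one gets $Z_{-b}^{(q,r)}(x-b) = Z^{(q)}(x)$ and $\overline{Z}_{-b}^{(q,r)}(x-b) = \overline{Z}^{(q)}(x)$ while $\overline{W}^{(q+r)}(x-b) = \overline{\overline{W}}^{(q+r)}(x-b) = 0$. Substituting into \eqref{vf_new_label} gives \eqref{vf_new_label_x<b} at once; specializing further to $x < 0$ (where $Z^{(q)}(x) = 1$ and $\overline{Z}^{(q)}(x) = x$) recovers the linear extension $v_b(x) = v_b(0) + \beta x$, confirming consistency with the stated convention. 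This closing step is routine once \eqref{vf_new_label} is in hand.
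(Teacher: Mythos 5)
Your proposal follows essentially the same route as the paper: cite Corollaries 4.4 and 4.5 of \cite{PerYam2} for the dividend and capital-injection NPVs, combine them with weight $-\beta$, collect the coefficients of the common block $Z_{-b}^{(q,r)}(x-b)-rZ^{(q)}(b)\overline{W}^{(q+r)}(x-b)$ into $-C_b$, and specialize to $x\le b$ via the vanishing of $W^{(q+r)}$ on the negative half-line. The only (harmless) discrepancy is that you anticipate needing the identity $Z^{(q)\prime}(b,\Phi(q+r))=\Phi(q+r)Z^{(q)}(b,\Phi(q+r))-rW^{(q)}(b)$ to clean up the coefficients, whereas the corollaries as used in the paper already carry the factor $q\Phi(q+r)Z^{(q)}(b,\Phi(q+r))$ in their denominators, so the reduction to the stated $C_b$ is immediate bookkeeping.
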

\begin{proof}
By Corollaries 4.4 and 4.5 in \cite{PerYam2}, for all $b \geq 0$ and $x \in \R$, we have
\begin{align*}
\bE_x \rbra{\int_{[0 , \infty)} e^{-qt} \diff L_r^{0 , b} (t)} &= 
r\rbra{ \frac{Z_{-b}^{(q, r)} (x-b) - r Z^{(q)}(b) \bar{W}^{(q + r)} (x-b)}{q\Phi (q +r)Z^{(q)}(b, \Phi (q+r)) }
 -\bar{\bar{W}}^{(q+r)} (x - b)}, %\label{npv_div}
 \\
%v_b^R(x) 
%\bE_x \rbra{\int_{[0 , \infty)} e^{-qt} \diff R_r^{0 , b} (t)}
%&=\frac{1}{q} \rbra{\frac{rZ^{(q)} (b) + qZ^{(q)} (b , \Phi (q+r))}{\Phi (q + r)Z^{(q)}(b , \Phi (q + r) )}} \notag \\ & \times \Big( Z_{-b}^{(q, r)} (x-b) - r Z^{(q)}(b) \bar{W}^{(q + r)} (x-b) \Big) \notag\\
%&+ r \bar{Z}^{(q)} (b)   \bar{W}^{(q + r)} (x-b)  - \Big( \overline{Z}_{-b}^{(q, r)} (x-b) +\frac{\psi^\prime(0+)}{ q}  \Big). %\label{npv_ci}
%\end{align*}
%\red{Better to write?
%\begin{align*}
\bE_x \rbra{\int_{[0 , \infty)} e^{-qt} \diff R_r^{0 , b} (t)}
&=\rbra{\frac{rZ^{(q)} (b) }{ q \Phi (q + r)Z^{(q)}(b , \Phi (q + r) )} + \frac 1 {\Phi(q+r)}} \notag \\ & \times \Big( Z_{-b}^{(q, r)} (x-b) - r Z^{(q)}(b) \bar{W}^{(q + r)} (x-b) \Big) \notag\\
&- \Big( \overline{Z}_{-b}^{(q, r)} (x-b) +\frac{\psi^\prime(0+)}{ q}  - r \bar{Z}^{(q)} (b)   \bar{W}^{(q + r)} (x-b) \Big). %\label{npv_ci}
\end{align*}
%}
%\red{[moved this here]} 
%\red{[ok to delete the following?]}where, in particular, for $-\infty < x \leq b$,
%\begin{align*}
%v_b^L(x) &=\frac{r}{q\Phi (q +r)} \frac{Z^{(q)}(x)}{Z^{(q)}(b, \Phi (q+r)) },\\
%v_b^R(x) &=  
%\frac{Z^{(q)}(x)}{q} \frac{rZ^{(q)} (b) + qZ^{(q)} (b , \Phi (q+r))}{\Phi (q + r)Z^{(q)}(b , \Phi (q + r) )}
%-\left(\overline{Z}^{(q)}(x)+\frac{\psi'(0+)}{q}\right).
%\end{align*}
Combining these, we have the claim.
\end{proof}

%It is noted that expressions \eqref{npv_div} and \eqref{npv_ci} also hold for $-\infty < x \leq b$ with %\red{[simplified the expression a bit below]}
%\begin{align*}
%v_b^L(x) &=\frac{r}{q\Phi (q +r)} \frac{Z^{(q)}(x)}{Z^{(q)}(b, \Phi (q+r)) },\\
%v_b^R(x) &=  
%\frac{Z^{(q)}(x)}{q} \frac{rZ^{(q)} (b) + qZ^{(q)} (b , \Phi (q+r))}{\Phi (q + r)Z^{(q)}(b , \Phi (q + r) )}
%-\left(\overline{Z}^{(q)}(x)+\frac{\psi'(0+)}{q}\right).
%\end{align*}

%which are finite by our assumption \eqref{mean-finite} as in the following lemma.
%Hence we can write the expected NPV, given in \eqref{v_pi}, in the following form:
%\begin{align}
%v_b (x) := v_b^L(x) - \beta  v_b^R (x), \quad x \in \R. \label{202}
%\end{align}

%\red{[moved here]}
%The following results, related to the computation of the expected NPV under a periodic barrier strategy at the level $b$, are immediate applications of  
%\par 

%\red{People may think $v_b^L$ and $v_b^R$ as positive/negative parts. How about $v_b^L$ and $v_b^R$?}

%\red{[should we change $C_b$ to $K_b$ or something? Because $\mathfrak{C}_b$ is used?]}

%\blue{Maybe remove the following result.} \red{agreed.}
%\begin{Thm}[{\cite[Corollary 3.3]{PerYam2}}]\label{Thm202a}
%For $b \geq 0$, we have
%\begin{align}\label{fpt_pr}
%\bE_0\sbra{e^{-q \tau^-_{-b}(r)}}=
%Z^{(q)}(b) -qW^{(q)}(b)\frac{Z^{(q)}(b , \Phi (q + r))}{Z^{(q)\prime}(b , \Phi (q + r))}.
%\end{align}
%\end{Thm}
%\blue{
%\blue{[Changed the folowing section here.]}
	\subsection{Smoothness of $v_b$}
	%\red{[this subsection holds for all $b$. So how about moving it to the end of the last section?]}
	%In this section we will show 
	%We first show
	%that, for any fixed $b\geq0$, the function $v_{b}$ is continuously differentiable on $(0,\infty)$ if the process $X$ is of bounded variation and twice continuously differentiable on $(0,\infty)$ if it is of unbounded variation.
	
	Here we  analyze the smoothness of the function $v_{b}$. % Toward this end, we shall first compute the derivatives of \eqref{vf_new_label}.
	% \eqref{npv_div} and \eqref{npv_ci}.  
%\red{[ok to remove the preceding sentence?]}
	 The proof of the following lemma is straightforward and is hence omitted. 
	 %\blue{we omit the proof}.% Recall the smoothness of the scale function as in Remark \ref{remark_scale_function_properties} (1). \red{[remove this preceding sentence because it is basically said immeidately after the lemma?]}
%\red{[I feel now the proof of this lemma is quite straightforward.  Remove the proof in the appendix?]}
%\blue{[Kazu: Removed the proof as you suggested.]}
	\begin{lemma}\label{derivatives_v}
%\red{[To make it shorter, we should just compute $v_b'$ and $v_b''$ using $C_b$.]}
%\blue{[Kazu: Computed $v_b'$ and $v_b''$ using $C_b$. Can you check?]}
		For all $b \geq 0$, %and $x \in \R \backslash \{0, b\}$,
		\begin{align}
		v_b^{\prime} (x) &= -qC_{b} W^{(q,r)}_{-b}(x-b)-r\overline{W}^{(q+r)}(x-b)+\beta Z^{(q,r)}_{-b}(x-b), \quad x \in \R \backslash \{ 0\}, 
%\left(Z^{(q)}(x)+r\int_0^{x-b}W^{(q+r)}(y)Z^{(q)}(x-y)dy\right), 
\label{302a}\\
		v_b^{ \prime \prime} (x +) &=  -qC_{b} \left(W^{(q)\prime}(x +)+rW^{(q+r)}(x-b)W^{(q)}(b)+r\int_0^{x-b}W^{(q+r)}(y)W^{(q)\prime}(x-y)\diff y \right)\notag\\&-rW^{(q+r)}(x-b)
		+\beta\Big(q W^{(q,r)}_{-b}(x-b)+rW^{(q+r)}(x-b)Z^{(q)}(b)\Big), \quad x \in \R \backslash \{0,b\}. 
		\label{303a}%\\
		%v_b^{L \prime \prime \prime} (x  ) &= 
		%r\Bigg{(} \frac{W^{(q)\prime \prime}(x)
		%+rW^{(q+r)}(x - b)W^{(q)\prime}(b)
		%+r\int_0^{x - b} W^{(q+r)}(y) W^{(q) \prime \prime}(x - y)dy}
		%{\Phi (q +r)Z^{(q)}(b, \Phi (q+r))  }  \n \\  
		%&~~~~~~~~~~~~~~~~~~~~~~~~~
		%+\rbra{\frac{rW^{(q)}(b)}{\Phi (q +r)Z^{(q)}(b, \Phi (q+r))} -1}
		%W^{(q+r)\prime}(x - b)\Bigg{)}. \label{304a}
		\end{align} 
	\end{lemma}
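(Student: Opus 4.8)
The plan is to differentiate the closed form \eqref{vf_new_label} for $v_b$ term by term, using the elementary scale-function identities
\[
\overline{W}^{(q)\prime}=W^{(q)},\quad \overline{\overline{W}}^{(q)\prime}=\overline{W}^{(q)},\quad Z^{(q)\prime}=qW^{(q)},\quad \overline{Z}^{(q)\prime}=Z^{(q)},
\]
together with Leibniz's rule applied to the convolution integrals defining $W_{-b}^{(q,r)}$, $Z_{-b}^{(q,r)}$ and $\overline{Z}_{-b}^{(q,r)}$ in \eqref{identities}. Differentiating these convolutions in the first argument and inserting the identities above gives
\[
\frac{\diff}{\diff x}Z_{-b}^{(q,r)}(x)=qW_{-b}^{(q,r)}(x)+rW^{(q+r)}(x)Z^{(q)}(b),\qquad
\frac{\diff}{\diff x}\overline{Z}_{-b}^{(q,r)}(x)=Z_{-b}^{(q,r)}(x)+rW^{(q+r)}(x)\overline{Z}^{(q)}(b),
\]
where in each case the last summand is the boundary term coming from the variable upper limit (evaluated at $y=x$, where the shifted argument collapses to $b$).

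Substituting $x\mapsto x-b$ and collecting the contributions to $v_b'$, the crucial observation is that the boundary terms cancel. The term $rW^{(q+r)}(x-b)Z^{(q)}(b)$ arising from $\frac{\diff}{\diff x}Z_{-b}^{(q,r)}$ is exactly offset by the derivative of $C_b\,rZ^{(q)}(b)\overline{W}^{(q+r)}(x-b)$, and likewise the $\overline{Z}^{(q)}(b)$ boundary term from $\frac{\diff}{\diff x}\overline{Z}_{-b}^{(q,r)}$ is offset by the derivative of $-\beta r\overline{Z}^{(q)}(b)\overline{W}^{(q+r)}(x-b)$. What survives is precisely \eqref{302a}. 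For $x<0$ one reads off $v_b'(x)=\beta$ directly from $v_b(x)=\beta x+v_b(0)$; the point $x=0$ is excluded because $W^{(q)}(0)=c^{-1}\neq0$ in the bounded-variation case produces a kink there, cf.\ Remark \ref{remark_scale_function_properties}.

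For the second derivative one differentiates \eqref{302a} once more. The only new ingredient is the derivative of the convolution defining $W_{-b}^{(q,r)}$, namely
\[
\frac{\diff}{\diff x}W_{-b}^{(q,r)}(x-b)=W^{(q)\prime}(x+)+rW^{(q+r)}(x-b)W^{(q)}(b)+r\int_0^{x-b}W^{(q+r)}(y)W^{(q)\prime}(x-y)\diff y,
\]
again obtained by Leibniz's rule, the middle term being the boundary contribution at the upper limit (where the shifted argument equals $b$). Combining this with $\frac{\diff}{\diff x}Z_{-b}^{(q,r)}(x-b)$ computed above and with $-r\tfrac{\diff}{\diff x}\overline{W}^{(q+r)}(x-b)=-rW^{(q+r)}(x-b)$ reproduces \eqref{303a} directly.

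The one point requiring care --- and the reason the formula is stated as a right derivative $v_b''(x+)$ on $\R\setminus\{0,b\}$ --- is the limited smoothness of the scale function: $W^{(q)}$ is only differentiable almost everywhere and is $C^1$ away from the origin solely under the extra hypotheses of Remark \ref{remark_scale_function_properties}(1). One therefore works with right derivatives throughout, justifies differentiating under the integral sign in the convolutions (on $(0,\infty)$ the scale functions are recovered as integrals of their a.e.\ derivatives), and excises $x=0$ (the kink of $W^{(q)}$ at the origin, which is also the junction of the two pieces of $v_b$) and $x=b$ (where the shifted argument $x-b$ reaches $0$, so that $W^{(q+r)}(x-b)$ and its derivative are evaluated at the origin). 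Away from these two points the computation is a routine application of the identities above, which is why the proof is omitted.
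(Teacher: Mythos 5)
Your proposal is correct and is precisely the ``straightforward'' computation the paper omits: direct term-by-term differentiation of \eqref{vf_new_label} using $Z^{(q)\prime}=qW^{(q)}$, $\overline{Z}^{(q)\prime}=Z^{(q)}$, and Leibniz's rule on the convolutions in \eqref{identities}, with the boundary terms at the variable upper limit cancelling against the derivatives of the $\overline{W}^{(q+r)}(x-b)$ terms. Your identification of why $x=0$ (and additionally $x=b$ for the second derivative) must be excluded --- the atoms $W^{(q)}(0)=c^{-1}$ and $W^{(q+r)}(0)=c^{-1}$ in the bounded-variation case --- is also the right reason.
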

	%\blue{Remove the third derivative? We need to assume that $W^{(q)}$ is twice differentiable.} \red{agreed. See my comments below.}
	
	%\red{[shortened this paragraph a bit.]}
	By the smoothness of the scale function on $\R \backslash \{0\}$ as in Remark \ref{remark_scale_function_properties}(1), the derivative \eqref{302a} %and \eqref{305a}
	 is continuous on  $\R \backslash \{0\}$. In particular, in the case  $X$ is of unbounded variation, by Remark \ref{remark_scale_function_properties} (1) and (2), the second derivative, given by \eqref{303a}, %and \eqref{306a}
	  is continuous on $\R \backslash \{0\}$.  
	%Now, using the derivatives in this lemma, we analyze their continuity at $b$ for the case of bounded and unbounded variation.  Recall the behaviors of the scale function around zero as in Remark \ref{remark_scale_function_properties}. 
	%Below, we summarize the obtained  results.
	Hence, we have the following results.
	\begin{lemma} \label{lemma_smooth_fit} For all $b \geq 0$, we have the following:
		\begin{enumerate}
			\item When $X$ is of bounded variation, $v_b$ is continuously differentiable on $\R \backslash \{0\}$.
			\item When $X$ is of unbounded variation, $v_b$ is twice continuously differentiable on $\R \backslash \{0\}$.
		\end{enumerate}
	\end{lemma}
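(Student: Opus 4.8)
The plan is to read off the smoothness of $v_b$ directly from the explicit formulas in Lemma \ref{lemma_NPV} and Lemma \ref{derivatives_v}, combined with the regularity of the scale function recorded in Remark \ref{remark_scale_function_properties}. The function $v_b$ is built from $Z^{(q)}_{-b}$, $\overline{Z}^{(q)}_{-b}$, $\overline{W}^{(q+r)}$, and $\overline{\overline{W}}^{(q+r)}$ through the convolution-type integrals in \eqref{identities}, so its regularity is inherited from that of $W^{(q)}$ and $W^{(q+r)}$. Since $\overline{W}^{(q+r)}$ and $\overline{\overline{W}}^{(q+r)}$ are one and two integrals of a scale function, they are at least $C^1$ and $C^2$ respectively everywhere, so the only possible loss of smoothness comes from the terms containing $W^{(q)}$ itself (undifferentiated) and from the corner at $x=b$ where the two regimes of \eqref{identities} are glued.

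First I would establish part (i). Lemma \ref{derivatives_v} gives an explicit expression \eqref{302a} for $v_b'$ on $\R\setminus\{0\}$ in terms of $W^{(q,r)}_{-b}$, $\overline{W}^{(q+r)}$, and $Z^{(q,r)}_{-b}$. In the bounded variation case, $W^{(q)}$ is continuous on $\R$ (with the single value $W^{(q)}(0)=c^{-1}$ at the origin by Remark \ref{remark_scale_function_properties}(2)), hence so are the convolution integrals defining $W^{(q,r)}_{-b}$ and $Z^{(q,r)}_{-b}$; therefore the right-hand side of \eqref{302a} is continuous on $\R\setminus\{0\}$, including at $x=b$. This yields that $v_b\in C^1(\R\setminus\{0\})$. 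I would note that the only point of non-differentiability is $x=0$, where $W^{(q)}$ has a jump in its left/right behaviour across the negative half-line, which is precisely why the domain excludes the origin.

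Next, for part (ii), I would use the second-derivative formula \eqref{303a}, valid a priori on $\R\setminus\{0,b\}$. In the unbounded variation case, Remark \ref{remark_scale_function_properties}(1) gives $W^{(q)}\in C^1(\R\setminus\{0\})$ and Remark \ref{remark_scale_function_properties}(2) gives $W^{(q)}(0)=0$. The terms $W^{(q)\prime}(x+)$ and the integral $\int_0^{x-b}W^{(q+r)}(y)W^{(q)\prime}(x-y)\,\diff y$ are then continuous on $(0,\infty)\setminus\{b\}$, and the remaining terms are continuous throughout. The crucial point is the corner $x=b$: in \eqref{303a} the integral term vanishes as $x\downarrow b$ and the boundary contribution $rW^{(q+r)}(x-b)W^{(q)}(b)$ tends to $rW^{(q+r)}(0)W^{(q)}(0)$, which is $0$ because $W^{(q)}(0)=0$ under unbounded variation. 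Thus the one-sided limits of \eqref{303a} at $x=b$ agree, so $v_b''$ extends continuously across $b$, giving $v_b\in C^2(\R\setminus\{0\})$.

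The main obstacle I anticipate is verifying that no spurious discontinuity is introduced at the gluing point $x=b$, where the integral representations in \eqref{identities} switch from the trivial regime ($x\le b$, where $v_b$ reduces to \eqref{vf_new_label_x<b}) to the nontrivial one. The matching of the one-sided second derivatives at $b$ hinges precisely on the vanishing $W^{(q)}(0)=0$ in the unbounded variation case, which is the analogue of the classical smooth-fit phenomenon; in the bounded variation case, where $W^{(q)}(0)=c^{-1}\neq 0$, one only obtains $C^1$ and not $C^2$, consistent with the dichotomy in the statement. Once the behaviour at $b$ is checked, the rest is a routine inheritance of regularity from the scale functions, which is why the detailed calculation is omitted in the statement of Lemma \ref{lemma_smooth_fit}.
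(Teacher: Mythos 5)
Your proposal is correct and follows essentially the same route as the paper, whose entire proof consists of the paragraph preceding the lemma: continuity of \eqref{302a} on $\R \backslash \{0\}$ from the regularity of the scale functions, and, in the unbounded variation case, continuity of \eqref{303a} via Remark \ref{remark_scale_function_properties}(1) and (2). Two harmless misstatements should be fixed: in part (i), $W^{(q)}$ is \emph{not} continuous on all of $\R$ in the bounded variation case (it jumps from $0$ to $c^{-1}$ at the origin, as you yourself note two sentences later) --- what you actually need, and what holds, is continuity on $\R \backslash \{0\}$ together with continuity of the convolution terms; and in part (ii), the boundary term $rW^{(q+r)}(x-b)W^{(q)}(b)$ tends to $rW^{(q+r)}(0)W^{(q)}(b)$ as $x \downarrow b$, which vanishes because $W^{(q+r)}(0)=0$ under unbounded variation, not because $W^{(q)}(0)=0$.
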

	
	%\red{[the following two remarks moved here.]}
	\begin{remark}[Continuity/smoothness at zero] \label{remark_smoothness_zero} For $b \geq 0$, we have the following. %\red{[change below to enumerate.]}
	\begin{enumerate}
		\item By Lemma \ref{lemma_NPV}, we have that $v_{b}$ is continuous at zero.
		\item  For the case $X$ is of unbounded variation,  $v_{b}$ is continuously differentiable at zero because, %. Indeed,
		%\eqref{C_b} and  \eqref{302a}
		%together with Remark \ref{remark_scale_function_properties}(2) give}  %\red{[rearranged the eqn below so that each term is positive]}
		%\begin{align*}
		%v_{b}'(0+)
		%&= -\frac{r(\beta Z^{(q)}(b)-1)}{\Phi(q+r)Z^{(q)}(b,\Phi(q+r))}W^{(q)}(0)+\beta Z^{(q)}(0)-\frac{\beta q}{\Phi(q+r)}W^{(q)}(0) = \beta = v_{b}'(0-).
		%\end{align*}
%\red{[I think something is wrong with above.  How about? ``B
by Lemma \ref{derivatives_v} and Remark \ref{remark_scale_function_properties}(2),
		%\begin{align*}
		$v_{b}'(0+)
		= -qC_{b} W^{(q)}(0) + \beta  = \beta = v_{b}'(0-)$.
		%\end{align*}
		\end{enumerate}
	\end{remark}
	\section{Selection of a candidate optimal barrier $b^*$}\label{sf_d} %\red{[I changed it so that we refrain from using $b^*$ until we define as the candidate threshold. OK?]}
%\red{[shortened]}	
In this section, we focus on the periodic barrier strategy defined in the previous section and choose the candidate barrier $b^*$, 
%and show its existence.
%We will choose \red{it} 
%the optimal barrier $b^*$ 
which satisfies
$v_{b^*}'(b^*)=1$
% the slope of the function $v_{b^*}$ at the barrier $b^*>0$ is equal to $1$ 
if such $b^* > 0$ exists, and set it to be zero otherwise.

Recall, as in Lemma \ref{lemma_smooth_fit}, that $v_b$ is continuously differentiable except at zero. If $b >0$, using \eqref{C_b} and \eqref{302a}, %Remark \ref{remark_scale_function_properties}(2) \red{[used?]}, 
we have % \red{[rearranged the eqn below so that each term is positive]}
\begin{align} \label{slope_b}
\begin{split}
v_b'(b) 
%v_{b}'(b-) =v_{b}'(b+) 
=
 %=- \frac{r(\beta Z^{(q)}(
%	b)-1)}{\Phi(q+r)Z^{(q)}(b,\Phi(q+r))}W^{(q)}(b)+\beta \Big( Z^{(q)}(b)-\frac{q}{\Phi(q+r)}W^{(q)}(b) \Big) \\
-qC_{b} W^{(q)}(b) + \beta Z^{(q)}(b)
	&=g(b)+1,
	\end{split}
\end{align}
where we define, for $b \geq 0$, %\red{[rearranged the eqn below so that each term is positive]}
\begin{align}
\begin{split}
g(b) &:= \rbra{1-\frac{rW^{(q)}(b)}{\Phi (q+r)Z^{(q)} (b , \Phi (q + r))}  }
\rbra{\beta Z^{(q)}(b)-1}- \frac{\beta q}{\Phi (q+r)} W^{(q)}(b) \\
&=%\rbra{
\frac{Z^{(q)\prime}(b, \Phi(q+r))}{\Phi (q+r)Z^{(q)} (b , \Phi (q + r))}  
\rbra{\beta Z^{(q)}(b)-1}- \frac{\beta q}{\Phi (q+r)} W^{(q)}(b).
\end{split}
  \label{g_0}
\end{align}
%\red{[added the second eqn above. Probably cleaner to use $Z^{(q)\prime}(b, \Phi(q+r))$]}
%\red{[moved this here.]} 
In other words, for $b > 0$, $v_b'(b) = 1$ if and only if $g(b) = 0$.% or equivalently $\mathfrak{C}_{b}$ holds with
%\begin{equation}\label{opt_thres}
%\mathfrak{C}_{b}:\left(\frac{r}{Z^{(q)}(b,\Phi(q+r))}-\frac{\Phi(q+r)}{W^{(q)}(b)}\right)\left(1-\beta Z^{(q)}(b)\right)=\beta q.
%\end{equation}
%\red{[maybe we can stop the sentence with $g(b) = 0$ and we don't have to define $\mathfrak{C}_{b}$?] }

%\red{[moved this remark here.]}

%\red{[It was a bit awkward to show $b^* < \infty$ and so I changed as follows. OK?]}
\begin{remark}[Probabilistic representation of $g$] \label{remark_g_probabilistic}

By \eqref{fpt_pr} and \eqref{g_0}, 
	\begin{align}\label{cond_opt_fpt}
	g(b)%&=\frac{1}{\Phi (q+r)}\frac{Z^{(q)\prime}(b , \Phi (q+r))}{Z^{(q)}(b , \Phi (q+r))}
	%(\beta Z^{(q)}(b)-1)- \frac{\beta q}{\Phi (q+r)} W^{(q)}(b) \notag\\
	=\frac{q}{\Phi (q+r)}\frac{\beta \bE_0\sbra{e^{-q\tau^-_{-b}(r)}}-1}{Z^{(q)}(b)-\bE_0\sbra{e^{-q\tau^-_{-b}(r)}}}W^{(q)}(b). 
	\end{align} 
%	\red{I get this?
%		\begin{align*}
%	=\frac{q}{\Phi (q+r)}\frac{\beta \bE_0\sbra{e^{-q\tau^-_{-b}(r)}}-1}{Z^{(q)}(b)-\bE_0\sbra{e^{-q\tau^-_{-b}(r)}}}. 
%	\end{align*} 
%	}
	%\red{above: We can delete the first equality?  }
	\begin{enumerate}
	\item Because $Z^{(q)}(b)-\bE_0\sbra{e^{-q\tau^-_{-b}(r)}} > 0$ for $b > 0$ and $b \mapsto \beta \bE_0\sbra{e^{-q\tau^-_{-b}(r)}}-1$ is strictly decreasing, there exists at most one root of $g(b) = 0$.
	\item Using, in \eqref{cond_opt_fpt}, the fact that $\lim_{b\uparrow \infty} \bE_0\sbra{e^{-q\tau^-_{-b}(r)}}=0$, and $W^{(q)}(x)/Z^{(q)}(x) \xrightarrow{x \uparrow \infty} \Phi(q) / q$ as in Exercise 8.5 (i) in \cite{Kyp}, 
	we have that $\lim_{b\uparrow \infty} g(b)=- {\Phi(q)} /{\Phi(q+r)}<0$.
	Therefore $g(b)$ must be negative for sufficiently large $b$. 
	\end{enumerate}
%	\red{cleaned the last equation a bit.}

\end{remark}

%\red{[rephrased this paragraph.]}
In order to handle also the case where such $b$ does not exist, we define
\begin{align}
b^\ast := \inf \cbra{ b \geq 0: 
g(b)  \leq 0}, \label{defbthreshold}
\end{align}
which is well-defined because, by Remark  \label{remark_g_probabilistic} (ii), the set $\cbra{ b \geq 0: 
g(b)  \leq 0} \neq \varnothing$.
Below, we provide a necessary and sufficient condition for the optimal barrier $b^*$ to be zero.

%\red{[JL: Below and throughout I changed $\beta(r+q)-r$ and similar expressions a bit so that it is easy to see the their signs. Please check]}
\begin{lemma} \label{lemma_criteria_zero}
	%\red{[made a lemma here]}
	We have $b^* = 0$ if and only if $X$ is of bounded variation and
	\begin{align}
	\beta  -1  -\frac{r (\beta-1) + q \beta}{c\Phi(q+r)}  \leq 0. \label{criteria_b_0}
	\end{align}
	%(ii) We have that $b^*<\infty$.
\end{lemma}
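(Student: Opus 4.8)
The plan is to reduce the condition $b^* = 0$ to the sign of $g$ at the origin and then evaluate $g(0)$ explicitly from \eqref{g_0}, splitting according to the path variation of $X$.

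First I would establish the equivalence $b^* = 0 \iff g(0) \leq 0$. Since $W^{(q)}$, $Z^{(q)}$, $Z^{(q)}(\cdot,\Phi(q+r))$ and $Z^{(q)\prime}(\cdot,\Phi(q+r))$ are all right-continuous at the origin, the function $g$ defined in \eqref{g_0} is right-continuous at $0$. Hence, if $g(0) \leq 0$, then $0$ belongs to the set defining $b^*$ in \eqref{defbthreshold}, so $b^* = 0$; conversely, if $g(0) > 0$, right-continuity forces $g > 0$ on some interval $[0,\varepsilon)$, whence $b^* \geq \varepsilon > 0$. Note that for this step only right-continuity is needed, not the at-most-one-root property of Remark \ref{remark_g_probabilistic}(i).

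Second, I would compute $g(0)$ directly from the second expression in \eqref{g_0}. Using $Z^{(q)}(0) = 1$, $Z^{(q)}(0,\Phi(q+r)) = 1$, and $Z^{(q)\prime}(0,\Phi(q+r)) = \Phi(q+r) - r W^{(q)}(0)$, the formula collapses to
\[
g(0) = \left(1 - \frac{r W^{(q)}(0)}{\Phi(q+r)}\right)(\beta - 1) - \frac{\beta q}{\Phi(q+r)} W^{(q)}(0).
\]
Then I split according to Remark \ref{remark_scale_function_properties}(2). If $X$ is of unbounded variation, $W^{(q)}(0) = 0$ and so $g(0) = \beta - 1 > 0$ (as $\beta > 1$), giving $b^* > 0$; hence $b^* = 0$ can only occur in the bounded variation case. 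If $X$ is of bounded variation, $W^{(q)}(0) = c^{-1}$ and the display becomes $g(0) = \beta - 1 - \frac{r(\beta - 1) + q\beta}{c\,\Phi(q+r)}$, which is precisely the left-hand side of \eqref{criteria_b_0}; thus $g(0) \leq 0$ is equivalent to \eqref{criteria_b_0}.

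Combining the two steps yields $b^* = 0$ if and only if $X$ is of bounded variation and \eqref{criteria_b_0} holds. The argument is essentially a direct substitution once the reduction to the sign of $g(0)$ is in place; the only delicate point I expect is handling the right-continuity of $g$ at $0$ consistently in the unbounded-variation case, where one must use $W^{(q)}(0) = 0$ so that $g(0) = \beta - 1$ and the strict inequality $\beta > 1$ rules out $b^* = 0$.
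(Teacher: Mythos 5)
Your proposal is correct and follows essentially the same route as the paper: reduce $b^*=0$ to the sign of $g(0)$ via the definition \eqref{defbthreshold}, evaluate $g(0)$ from \eqref{g_0}, and split on path variation using Remark \ref{remark_scale_function_properties}(2). Your explicit right-continuity argument for the equivalence $b^*=0 \iff g(0)\leq 0$ is a welcome touch of rigor that the paper leaves implicit, but it is not a different approach.
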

\begin{proof}
	By the definition of $b^*$ as in \eqref{defbthreshold}, we have that $b^* = 0$ if and only if $g(0) \leq 0$ where, 
	by \eqref{g_0},
	\begin{align*}
	g(0)
	&= \beta  -1  -\left( r (\beta -1) + q \beta \right)  \frac{W^{(q)}(0)}{\Phi(q+r)}.
	\end{align*}
	For the case of unbounded variation (where $W^{(q)}(0) = 0$ by Remark \ref{remark_scale_function_properties}(2)), we have $g(0) = \beta - 1 > 0$ and hence $b^* > 0$. On the other hand, for the case of bounded variation, by Remark  \ref{remark_scale_function_properties}(2), $b^* = 0$ if and only if \eqref{criteria_b_0} holds.
	%(ii) 	
%	By Theorem \ref{Thm202a} \red{and Corollary 3.3 in \cite{PerYam2}}, we have that
%	\begin{align}
%	\frac{Z^{(q)\prime}(b , \Phi (q+r))}{Z^{(q)}(b , \Phi (q+r))}
%	=\frac{qW^{(q)}(b)}{Z^{(q)}(b)-\bE_0\sbra{e^{-q\tau^-_{-b}(r)}}},
%	\end{align}
%	
%	Hence using \eqref{fpt_pr} and \eqref{g_0}, we have for $b> \red{\geq?}0$
%	\begin{align}\label{cond_opt_fpt}
%	g(b)&=\frac{1}{\Phi (q+r)}\frac{Z^{(q)\prime}(b , \Phi (q+r))}{Z^{(q)}(b , \Phi (q+r))}
%	(\beta Z^{(q)}(b)-1)- \frac{\beta q}{\Phi (q+r)} W^{(q)}(b) \notag\\
%	&=\frac{\beta q}{\Phi (q+r)}\frac{\bE_0\sbra{e^{-q\tau^-_{-b}(r)}}-\frac{1}{\beta}}{Z^{(q)}(b)-\bE_0\sbra{e^{-q\tau^-_{-b}(r)}}}W^{(q)}(b). 
%	\end{align} 
	%\red{[shortened using the remark above.]} 

\end{proof}
\begin{remark}[slope at $b^*$] \label{remark_slope_at_b}
%\red{[shortened]}
%	Using \eqref{302a}, \eqref{305a},  \eqref{opt_thres}
%	and Remark \ref{remark_scale_function_properties}(2), we have
%	\begin{align}\label{slope_b}
%	v_{b^*}'(b^*-) =v_{b^*}'(b^*+)&=\frac{r(1-\beta Z^{(q)}(
%		b^*))}{\Phi(q+r)Z^{(q)}(b^*,\Phi(q+r))}W^{(q)}(b^*)+\beta Z^{(q)}(b^*)-\frac{\beta q}{\Phi(q+r)}W^{(q)}(b^*)\notag\\
%	&=g(b^*)+1.
%	\end{align}
(i) If $b^* > 0$ (i.e.\ $g(b^*)=0$), the equation \eqref{slope_b} implies $v_{b^*}'(b^*)=1$. (ii) If $b^*=0$ (i.e.\ $g(0)\leq0$), \eqref{slope_b} gives $v_{b^*}'(0+)\leq 1$.
\end{remark}

%\red{[I made this remark. Could you check?]
\begin{remark} \label{C_b_simplification} Suppose $b^* > 0$ (i.e. $g(b^*) = 0$).  
%Then we have 
%\begin{align*}
%\frac{(\beta Z^{(q)}(b)-1)}{Z^{(q)} (b , \Phi (q + r))}  
% = \frac {\beta q W^{(q)}(b)} {Z^{(q)\prime}(b, \Phi(q+r))}.
%\end{align*}
%Hence,
%\begin{align*}
%C_{b^*}&=\frac{- r(\beta Z^{(q)} (b^*)-1) - q \beta Z^{(q)} (b^* , \Phi (q+r))}{q\Phi(q+r)Z^{(q)}(b^*,\Phi(q+r))} =\frac{- r(\beta Z^{(q)} (b^*)-1)}{q\Phi(q+r)Z^{(q)}(b^*,\Phi(q+r))} - \frac {\beta} {\Phi(q+r)} \\
%&=- \frac {\beta r W^{(q)}(b^*)} {Z^{(q)\prime}(b^*, \Phi(q+r))} - \frac {\beta} {\Phi(q+r)} =- \beta \frac {Z^{(q)}(b^*, \Phi(q+r))} {Z^{(q)\prime}(b^*, \Phi(q+r))}.
%\end{align*}
Then by \eqref{slope_b}, we have
%This also equals, by the second equality of \eqref{slope_b}, 
\begin{align*}
C_{b^*} = \frac {\beta Z^{(q)}(b^*) - 1}{q W^{(q)}(b^*)}. 
\end{align*}
\end{remark}

%\begin{Lem}\label{Lem402a}
%For $b, x \geq 0$, we have 
%\begin{align}
%v_b(x) \leq v_{b^\ast}(x)  . \label{403a}
%\end{align}
%\end{Lem}
%\Proof{
%Define $u_x(b) = v_b (x)$. \par
%(i) Suppose $0<b<b^\ast$. Then we have
%\begin{align}
%u_x^\prime (b) = \frac{\Phi (q+r)}{Z^{(q)}(b , \Phi (q + r))}g (b)Z^{(q)}(x) \geq 0
%\end{align}
%by \eqref{402}. So we obtain \eqref{403a}. 
%}
%\par
\section{Verification of optimality}\label{VoO} %\red{[let's change this sentence - Xiang basically wrote this.]}
In this section, we will show the optimality of the strategy $\bar{\pi}^{0,b^*}$ for the value of $b^*$ selected in the previous section.

%the periodic barrier strategy at the level $b^*$ (defined in $(\ref{defbthreshold})$) with classical reflection at $0$ given below. %Our goal is to prove the main result of this paper given below.}

%In this section, we provide a rigorous verification argument of the choice of $b^{\ast}$ defined in $(\ref{defbthreshold})$ such that the supremum value of the stochastic control problem $(\ref{control:value})$ can be achieved.
%\red{[I guess these hold for a general $b$? Then, it is better to place after Lemma \ref{lemma_NPV}?]} 
%Our goal is to prove the main result of this paper given below.
\begin{theorem} \label{main_theorem}The strategy $\bar{\pi}^{0,b^*}$ is optimal and the value function of the problem $(\ref{control:value})$ is given by $v = v_{b^*}$.
\end{theorem}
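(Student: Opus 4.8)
The plan is to prove optimality via the standard \emph{guess and verify} approach using a verification lemma. First I would invoke the verification lemma (which the paper defers to \cite{PerYam}, since it is obtained similarly): it asserts that if a sufficiently smooth candidate function $w$ satisfies (a) $w \geq v_\pi$ pointwise for every admissible $\pi$ after establishing that $w$ dominates the expected NPV of any strategy, which is typically reduced to showing that $w$ is a supersolution of the Hamilton--Jacobi--Bellman (HJB) variational inequalities, and (b) $w = v_{b^*}$ is itself the expected NPV of the admissible strategy $\bar\pi^{0,b^*}$, then $w = v$ and $\bar\pi^{0,b^*}$ is optimal. Since part (b) is immediate from Lemma \ref{lemma_NPV} (the candidate $v_{b^*}$ is by construction the NPV of the admissible periodic-classical barrier strategy), the entire burden falls on verifying the variational inequalities for $v_{b^*}$.

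The variational inequalities to verify take the form
\begin{align*}
\max\Big\{ (\mathcal{L} - q) v_{b^*}(x) + r\sup_{0 \le \ell \le x}\big(\ell - (v_{b^*}(x) - v_{b^*}(x-\ell))\big),\; \beta - v_{b^*}'(x) \Big\} \leq 0, \quad x > 0,
\end{align*}
together with the capital-injection (reflection) condition at zero, namely $v_{b^*}'(0+) \leq \beta$, where $\mathcal{L}$ is the infinitesimal generator of $X$. The key steps I would carry out are: (1) show the slope condition $\beta - v_{b^*}'(x) \leq 0$ holds, equivalently $v_{b^*}'(x) \geq \beta > 1$ for all relevant $x$; (2) show the Parisian/dividend term is handled by proving that the pointwise supremum over $\ell$ is attained appropriately, which reduces to showing $v_{b^*}'(x) \leq 1$ on $[0, b^*]$ and the generator inequality on $(b^*, \infty)$; and (3) check the reflection condition at zero, which is exactly Remark \ref{remark_slope_at_b}(ii) combined with Remark \ref{remark_smoothness_zero}.

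The crux, as the introduction signals, is \textbf{convexity of $v_{b^*}$}. The slope $v_{b^*}'$ starts at $\beta$ at zero and must increase to satisfy the dividend inequality; I would establish that $v_{b^*}'$ is monotone by exploiting the observation, foreshadowed after \eqref{g_0}, that the slope is proportional to a ruin identity of the form $\bE_0[e^{-q\tau^-_{-b}(r)}]$-type quantities that are monotone in the starting value. Concretely, I expect to show $v_{b^*}''(x+) \geq 0$ on $\R\backslash\{0, b^*\}$ using the expression \eqref{303a} and the sign analysis of $g$; by the definition of $b^*$ in \eqref{defbthreshold}, $g(b) > 0$ for $b < b^*$ and $g(b) \leq 0$ for $b \geq b^*$, and this monotonicity/sign structure is what forces $v_{b^*}'(x) \leq 1$ for $x \leq b^*$ and the generator condition for $x > b^*$.

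The main obstacle I anticipate is precisely this convexity argument, specifically translating the monotonicity of the ruin identity $b \mapsto \beta\bE_0[e^{-q\tau^-_{-b}(r)}] - 1$ established in Remark \ref{remark_g_probabilistic} into the sign of $v_{b^*}''$, handling the bounded- versus unbounded-variation cases separately (since smoothness at $b^*$ differs, and at $0$ only $C^1$ or $C^2$ smoothness is available per Lemma \ref{lemma_smooth_fit}). A secondary technical point is verifying the generator inequality $(\mathcal{L}-q)v_{b^*}(x) + r(\cdots) \leq 0$ on $(b^*,\infty)$; here I would use that on $x > b^*$ the optimal dividend level pushes the process back to $b^*$, so the relevant quantity simplifies, and the inequality should follow from the fact that $v_{b^*}$ coincides with a genuine NPV below $b^*$ together with the convexity established above. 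I would treat the two variation regimes in parallel, using smooth fit ($C^2$) in the unbounded case and continuous fit ($C^1$) plus one-sided second derivatives in the bounded case.
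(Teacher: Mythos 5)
Your overall architecture (verification lemma, generator identity on the two regions $(0,b^*)$ and $(b^*,\infty)$, slope bounds via a ruin identity, separate treatment of the bounded/unbounded variation cases) matches the paper's, but two of your three key inequalities point the wrong way, and the steps built on them would fail. First, the capital-injection condition in the verification lemma (second line of \eqref{v1}) is $v_{\hat{\pi}}'(x)\leq\beta$, not $\beta-v_{b^*}'(x)\leq 0$: since one may always inject a unit of capital at marginal cost $\beta$, the marginal value of surplus can never exceed $\beta$. Your step (1), ``show $v_{b^*}'(x)\geq\beta$ for all relevant $x$,'' is impossible: by the very choice of $b^*$ (Remark \ref{remark_slope_at_b}) one has $v_{b^*}'(b^*)=1<\beta$. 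Second, the dividend condition on $[0,b^*]$ is also reversed: to get $\max_{0\leq l\leq x}\{l+v_{b^*}(x-l)-v_{b^*}(x)\}=0$ for $x\leq b^*$ one needs the slope to be \emph{at least} $1$ there; the correct statement is $1\leq v_{b^*}'\leq\beta$ on $(0,b^*)$ and $0\leq v_{b^*}'\leq 1$ on $(b^*,\infty)$, exactly as in Lemma \ref{cond_b^*}.

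Third, and as a consequence, the ``convexity'' you aim to prove is wrong (the word in the paper's introduction is a slip): the candidate value function is \emph{concave}. The ruin-identity observation gives, for $b^*>0$, the exact formula $v_{b^*}'(x)=\beta\,\mathbb{E}_{x-b^*}\big[e^{-q\tau^-_{-b^*}(r)}\big]$ (display \eqref{404}), and this Laplace transform of the Parisian ruin time is \emph{decreasing} in the starting point $x$; hence $v_{b^*}'$ decreases from $\beta$ at $0+$ through $1$ at $b^*$ and stays in $[0,1]$ beyond, which is precisely what makes both the bound $v_{b^*}'\leq\beta$ and the dividend condition hold (in the $b^*=0$ case the paper likewise shows $v_0''(x+)<0$, i.e.\ concavity). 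Any attempted proof of $v_{b^*}''\geq 0$ contradicts $v_{b^*}'(0+)=\beta>1=v_{b^*}'(b^*)$. Finally, the generator inequality on $(b^*,\infty)$ is not deduced ``from convexity'' or from $v_{b^*}$ being an NPV: Lemma \ref{ineq_gen} computes exactly $(\mathcal{L}-q)v_{b^*}(x)=-r\{(x-b^*)+v_{b^*}(b^*)-v_{b^*}(x)\}$ via the harmonicity-type identities \eqref{martingale_Z_R}--\eqref{418}, which together with Lemma \ref{cond_max_v} yields the first line of \eqref{v1} \emph{with equality}; this exact computation is an ingredient your sketch would still need even after the signs are corrected.
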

%\red{[to change from the one with Xiang, how about changing $F$ to $f$?]}

In order to show Theorem \ref{main_theorem}, it suffices to show variational inequalities. We omit the proof of the following proposition because
%The proof of the following lemma is omitted as
 it is essentially the same as the spectrally positive case given in Lemma 5.3 of \cite{PerYam}.
Here we slightly relax the assumption on the smoothness at zero, which can be done by applying the Meyer-It\^o formula as in Theorem 4.71 of \cite{Pro}. 

%\red{[moved here]}
Let $\mathcal{L}$ be the infinitesimal generator associated with
the process $X$ applied to a measurable function $f$ on $\R$ that is $C^1 (0, \infty)$ (resp.\ $C^2 (0, \infty)$) for the case in which $X$ is of bounded (resp.\ unbounded) variation with
\begin{align*} %\label{generator}
\mathcal{L} f(x) := \gamma f'(x) + \frac 1 2 \sigma^2 f''(x) + \int_{(-\infty,0)} \left[ f(x+z) - f(x) -  f'(x) z 1_{\{-1 < z < 0\}} \right] \Pi(\diff z). %\quad x \in \R.
\end{align*}
Below, as in Avram et al.\ \cite{AvrPalPis}, we extend the domain of $v_{\pi}$ of \eqref{v_pi} to $\R$ by setting $v_{\pi}(x) = \beta x + v_{\pi}(0)$ for $x < 0$.
\begin{proposition}\label{verificationlemma}
Suppose $\hat{\pi} \in \mathcal{A}$ %is an admissible dividend strategy
such that $v_{\hat{\pi}}$ is $C^1 (0, \infty)$ (resp.\ $C^2 (0, \infty)$) for the case $X$ is unbounded (resp.\ unbounded) variation, continuous on $\R$ and, for the case of unbounded variation, continuously differentiable at zero. In addition, suppose that %it satisfies
\begin{align}\label{v1}
\begin{split}
({\cal{L}}-q)v_{\hat{\pi}} (x) +r \max_{0 \leq l \leq x} 
\{l+ v_{\hat{\pi}} (x - l)- v_{\hat{\pi}} (x)\}  \leq 0 ,& ~~~x > 0, % x > 0 ?? 
\\
v_{\hat{\pi}}^\prime (x) \leq \beta ,& ~~~x>0, \\
\inf_{x \geq 0}v_{\hat{\pi}}(x)> -m,& ~~~\text{for some } m>0. 
\end{split}
\end{align}  
%\blue{[Kazu: I am a bit confused why is the first equality greater than $0$? Shouldn't it be less or equal than zero because we are maximizing, like in the paper with Xiang, or with Kouji and Kei?]} \red{sorry. I meant to write $x > 0$.}
Then $v_{\hat{\pi}} (x)=v(x)$ for all $x \geq 0$ and hence $\hat{\pi}$ is an optimal strategy. 
\end{proposition}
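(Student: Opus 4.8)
The plan is the standard \emph{verify} half of the guess-and-verify scheme. Since $\hat\pi \in \mathcal{A}$, the bound $v_{\hat\pi}(x) \le v(x)$ is immediate from the definition \eqref{control:value} of $v$ as a supremum over admissible strategies, so the entire content of the proposition is the reverse inequality $v_{\hat\pi}(x) \ge v_\pi(x)$ for an \emph{arbitrary} $\pi \in \mathcal{A}$; taking the supremum over $\pi$ then gives $v \le v_{\hat\pi}$ and hence $v = v_{\hat\pi}$ with $\hat\pi$ optimal.

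To obtain the reverse bound I would fix $\pi \in \mathcal{A}$ with controlled surplus $U^\pi = X - L^\pi + R^\pi$ and apply the change-of-variables formula to $t \mapsto e^{-qt} v_{\hat\pi}(U^\pi(t))$. The L\'evy dynamics of $X$ produce the generator drift $e^{-qt}(\mathcal{L}-q)v_{\hat\pi}(U^\pi(t-))\,\diff t$ together with a local martingale; the capital injection contributes $e^{-qt} v_{\hat\pi}'(U^\pi(t-))\,\diff R^\pi(t)$ (with the obvious correction at jumps of $R^\pi$); and the dividends, paid only at the Poisson times $\mathcal{T}_r$ as downward jumps of size $\nu^\pi$, contribute the sum $\sum_{T(i)} e^{-qT(i)}[v_{\hat\pi}(U^\pi(T(i)-)-\nu^\pi(T(i))) - v_{\hat\pi}(U^\pi(T(i)-))]$. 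Making the dividend integral $\int e^{-qt}\diff L^\pi = \sum_{T(i)} e^{-qT(i)}\nu^\pi(T(i))$ appear explicitly and compensating the Poisson sum at rate $r$ yields, after localising to remove the martingale and taking expectations, the identity
\begin{align*}
\mathbb{E}_x\left[\int_0^T e^{-qt}\diff L^\pi(t) - \beta\int_0^T e^{-qt}\diff R^\pi(t)\right]
&= v_{\hat\pi}(x) - \mathbb{E}_x\left[e^{-qT}v_{\hat\pi}(U^\pi(T))\right] \\
&\quad + \mathbb{E}_x\int_0^T e^{-qt}\big[(\mathcal{L}-q)v_{\hat\pi}(U^\pi(t-)) + r\,\Gamma(t)\big]\diff t \\
&\quad + \mathbb{E}_x\int_0^T e^{-qt}\big[v_{\hat\pi}'(U^\pi(t-)) - \beta\big]\diff R^\pi(t),
\end{align*}
where $\Gamma(t) := \nu^\pi(t) + v_{\hat\pi}(U^\pi(t-)-\nu^\pi(t)) - v_{\hat\pi}(U^\pi(t-))$. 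Since admissibility forces $0 \le \nu^\pi(t) \le U^\pi(t-)$, we have $\Gamma(t) \le \max_{0\le l\le U^\pi(t-)}\{l + v_{\hat\pi}(U^\pi(t-)-l) - v_{\hat\pi}(U^\pi(t-))\}$, so the first variational inequality in \eqref{v1} makes the second expectation nonpositive, while the second inequality $v_{\hat\pi}' \le \beta$ makes the third nonpositive. Hence the left-hand side is $\le v_{\hat\pi}(x) - \mathbb{E}_x[e^{-qT}v_{\hat\pi}(U^\pi(T))]$.

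It remains to let $T\to\infty$. The third variational inequality $\inf_x v_{\hat\pi} > -m$ gives $e^{-qT}v_{\hat\pi}(U^\pi(T)) \ge -m e^{-qT}$, so $\liminf_T \mathbb{E}_x[e^{-qT}v_{\hat\pi}(U^\pi(T))] \ge 0$ and the boundary term drops out; monotone convergence on the nonnegative dividend integral together with the admissibility bound \eqref{admissibility2} on the injection integral then delivers $v_\pi(x) \le v_{\hat\pi}(x)$, as required. The one delicate point---and precisely the reason the smoothness hypothesis is relaxed to mere continuity at zero in the bounded-variation case---is the validity of the change-of-variables formula across the reflecting boundary $x=0$, where $v_{\hat\pi}$ may have a convex kink. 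Here I would appeal to the Meyer-It\^o formula (Theorem 4.71 of \cite{Pro}); the only additional term it contributes is a local time at $0$, and since the surplus is driven away from the boundary only through the increasing injection process, this term carries a favourable sign and is absorbed into the injection estimate. Away from $0$ the assumed $C^1$ (bounded variation) or $C^2$ (unbounded variation) regularity is exactly what the relevant change-of-variables formula needs, and from this point the computation proceeds exactly as in Lemma~5.3 of \cite{PerYam}.
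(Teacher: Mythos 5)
Your proposal is correct and is essentially the paper's own approach: the paper omits the proof precisely because it is the standard verification argument of Lemma 5.3 in \cite{PerYam}, with the smoothness requirement at zero relaxed via the Meyer--It\^o formula (Theorem 4.71 of \cite{Pro}) --- exactly the two ingredients of your argument. One small inaccuracy worth noting: admissibility does not literally force $\nu^{\pi}(t) \leq U^{\pi}(t-)$ (a dividend overpayment offset by a simultaneous capital injection keeps $U^{\pi}\geq 0$ and is admissible); that case is handled by combining the first inequality of \eqref{v1} with $v_{\hat{\pi}}^{\prime} \leq \beta$ and $\beta>1$, which is a technicality rather than a change of approach.
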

We shall provide some preliminary results in order to show the variational inequalities.
%optimality of the strategy $\pi^{b^*}$.  %\red{The following holds for $b$? Then, we should change $b^*$ to $b$.}
\begin{lemma}\label{ineq_gen}
For $b\geq 0$, we have
\begin{align}
({\cal{L}}-q)v_{b}(x)=
\begin{cases}
0   &\text{if} ~x\in (0 , b), \\
-r \{ (x - b) + v_{b} (b)-v_{b}(x) \}
~~&\text{if}~ x \in[b,  \infty).
\end{cases}
\label{405}
\end{align}
\end{lemma}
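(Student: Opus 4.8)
The plan is to verify \eqref{405} separately on the two regions $(0,b)$ and $[b,\infty)$, feeding the explicit formulas \eqref{vf_new_label_x<b} and \eqref{vf_new_label} for $v_b$ into the generator and reducing everything to the standard generator identities for scale functions: for $x>0$,
\[
(\mathcal{L}-q)W^{(q)}(x)=0,\qquad (\mathcal{L}-q)Z^{(q)}(x)=0,\qquad (\mathcal{L}-q)\overline{Z}^{(q)}(x)=\psi'(0+),
\]
together with $(\mathcal{L}-q)k=-qk$ for a constant $k$. The last identity (which requires the finiteness \eqref{mean-finite}, so that $\overline{Z}^{(q)}$, growing linearly below zero, lies in the domain of $\mathcal{L}$) is exactly what the additive constant $\psi'(0+)/q$ in \eqref{vf_new_label_x<b} is designed to absorb. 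Throughout, each scale function is taken with its usual extension to $\mathbb{R}$ ($Z^{(q)}\equiv1$, $\overline{Z}^{(q)}(x)=x$, $W^{(q)}\equiv0$ on the negative half-line).

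First I would treat $x\in(0,b)$. On $(-\infty,b]$ the function $v_b$ equals $\phi:=-C_bZ^{(q)}+\beta\bigl(\overline{Z}^{(q)}+\psi'(0+)/q\bigr)$. The observation that legitimises the computation is that this expression is \emph{consistent with the prescribed extension}: for $y<0$ it reduces to $\beta y+v_b(0)$, matching $v_b(y)=\beta y+v_b(0)$. Since $X$ is spectrally negative, for $x\in(0,b)$ every value $v_b(x+z)$ entering the jump integral of $\mathcal{L}$ has $x+z<b$ and hence agrees with $\phi$, so $(\mathcal{L}-q)v_b(x)=(\mathcal{L}-q)\phi(x)$; by linearity and the three identities above this is $-C_b\cdot0+\beta\psi'(0+)+\beta\tfrac{\psi'(0+)}{q}(-q)=0$, which is the first line of \eqref{405}.

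The substantial part, and where I expect the main obstacle, is $x\ge b$. I would first rewrite the target as $(\mathcal{L}-(q+r))v_b(x)=-r\{(x-b)+v_b(b)\}$, which is \eqref{405} rearranged and which explains the systematic appearance of the $(q+r)$-functions $W^{(q+r)},\overline{W}^{(q+r)},\overline{\overline{W}}^{(q+r)}$ and of the Parisian building blocks $W_{-b}^{(q,r)},Z_{-b}^{(q,r)},\overline{Z}_{-b}^{(q,r)}$ from \eqref{identities}. Extending $\phi$ to all of $\mathbb{R}$ gives $(\mathcal{L}-q)\phi\equiv0$ on $(0,\infty)$ (same computation as above, valid at every $x>0$), so writing $v_b=\phi+\rho$ with $\rho:=v_b-\phi$ vanishing on $(-\infty,b]$ reduces the task to evaluating $(\mathcal{L}-q)\rho$ on $[b,\infty)$. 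The difficulty is twofold: $\rho$ is assembled from the convolution-type functions in \eqref{identities}, whose images under $\mathcal{L}$ are not immediate, and at a point $x\ge b$ the jump integral crosses the barrier, where $v_b$ switches functional form. I would resolve this by importing the generator/martingale properties of these functions from the Parisian-reflection framework of \cite{PerYam2} (the same source behind \eqref{403} and Lemma \ref{lemma_NPV}): the scale-type functions of the process Parisian-reflected at rate $r$ above $b$ are harmonic for the modified generator $f\mapsto\mathcal{L}f(x)+r[f(b)-f(x)]\mathbf{1}_{\{x>b\}}$, i.e.\ they satisfy $(\mathcal{L}-(q+r))f(x)=-rf(b)$ for $x>b$; inserting $v_b$ into this relation and tracking the reward $(x-b)$ paid at each reflection produces precisely the forcing term $-r\{(x-b)+v_b(b)\}$.

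As a guide and independent consistency check I would keep in mind the probabilistic reading of \eqref{405}, which can itself be turned into a proof. By the Markov property and the definition \eqref{v_pi} of $v_b$, the process $\int_0^te^{-qs}\diff L_r^{0,b}(s)-\beta\int_0^te^{-qs}\diff R_r^{0,b}(s)+e^{-qt}v_b(U_r^{0,b}(t))$ is a martingale. Applying the Meyer-It\^o formula (legitimate since $v_b$ is $C^1$, resp.\ $C^2$, by Lemma \ref{lemma_smooth_fit}) and collecting finite-variation terms, the continuous and jump parts of $X$ contribute $e^{-qs}(\mathcal{L}-q)v_b(U_r^{0,b}(s))\,\diff s$, where the downward jumps below zero are correctly weighted by the capital-injection cost through the linear extension of $v_b$ below zero; the classical reflection at zero contributes a term that cancels against $-\beta\,\diff R_r^{0,b}$ (by the continuous fit $v_b'(0+)=\beta$ in Remark \ref{remark_smoothness_zero} in the unbounded-variation case, and by the same linear extension in the bounded-variation case, where the reflection is purely by jumps); and compensating the rate-$r$ Poisson dividends contributes $r\,e^{-qs}\{(U_r^{0,b}(s)-b)+v_b(b)-v_b(U_r^{0,b}(s))\}\mathbf{1}_{\{U_r^{0,b}(s)>b\}}\,\diff s$. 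Since a martingale has vanishing drift, equating the integrand to zero at each state yields \eqref{405}. The only non-routine point here is passing from ``almost every state'' to ``every state'', which follows from right-continuity of $U_r^{0,b}$ and the fact that it visits every level; for a fully self-contained argument I would nonetheless favour the analytic verification above.
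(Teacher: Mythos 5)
Your treatment of $x\in(0,b)$ is correct and is essentially the paper's own argument: both reduce to the identities $(\mathcal{L}-q)Z^{(q)}(y)=0$ and $(\mathcal{L}-q)\bigl(\bar{Z}^{(q)}(y)+\psi'(0+)/q\bigr)=0$ for $y>0$ (quoted from \cite{BKY}), combined with the observation that \eqref{vf_new_label_x<b} is consistent with the linear extension of $v_b$ below zero, so that spectral negativity lets you evaluate the jump integral using a single smooth expression.

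The gap is in the region $x\geq b$, which is the substantive half of the lemma. Your main argument there rests on a claim attributed to \cite{PerYam2}: that the building blocks of \eqref{vf_new_label} satisfy $(\mathcal{L}-(q+r))f(x)=-rf(b)$ for $x>b$. No such generator identity appears in \cite{PerYam2}, which supplies fluctuation/NPV identities rather than generator computations, and the claim is false for every individual building block: the paper proves in \eqref{418_a} that $(\mathcal{L}-(q+r))Z^{(q,r)}_{-b}(x-b)=0$ on $(b,\infty)$, whereas your identity would force this to equal $-rZ^{(q,r)}_{-b}(0)=-rZ^{(q)}(b)\neq0$; likewise $(\mathcal{L}-(q+r))\bar{W}^{(q+r)}(x-b)=1$ and $(\mathcal{L}-(q+r))\bar{\bar{W}}^{(q+r)}(x-b)=x-b$ by \eqref{411}, while your identity would force both to vanish. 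Each block solves a \emph{different} inhomogeneous equation, and only the specific combination $v_b$, with the reward $(x-b)$ included, satisfies the modified-generator equation---but that statement \emph{is} the lemma, so invoking ``harmonicity of the scale-type functions of the Parisian-reflected process'' is either false (if applied blockwise) or circular (if applied to the value function itself). What actually has to be done, and what the paper does, is to establish the blockwise identities: the two in \eqref{411} from the proof of Lemma 5.2 of \cite{NobPerYamYan}, and $(\mathcal{L}-(q+r))\int_0^{x-b}W^{(q+r)}(y)Z^{(q)}(x-y)\diff y=Z^{(q)}(x)$ from the proof of Lemma 4.5 of \cite{EgaYam}, which yield \eqref{418_a} and its analogue \eqref{418} for $\bar{Z}^{(q,r)}_{-b}$; the lemma then follows by taking the linear combination dictated by \eqref{vf_new_label} and using the closed form of $v_b(b)$. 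Your decomposition $v_b=\phi+\rho$ is a clean way to organize that computation, but it does not remove the need for these identities. The probabilistic argument you sketch as a backup (Meyer-It\^o applied to $e^{-qt}v_b(U_r^{0,b}(t))$ plus the martingale property of the value process) is a genuinely different and in-principle viable route, but as written it is only a sketch: the compensation of the Poisson dividend jumps, the cancellation of the reflection term at zero against $-\beta\,\diff R_r^{0,b}$, the integrability needed to conclude that the drift vanishes, and the passage from almost-every to every $x$ are asserted rather than proved---and you yourself defer to the analytic argument, which is the one with the hole.
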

\begin{proof}
(i) Suppose $0 < x < b$. %\red{[shortened a bit]}
%By \eqref{vf_new_label_x<b}, we have
%\begin{align*}
%v_{b} (x)=\left(C_{b}-\beta\frac{l^{(q)}(b)}{Z^{(q)}(b)}\right)Z^{(q)}(x)+\beta l^{(q)}(x).
%\end{align*}  
By the proof of Theorem 2.1 in \cite{BKY}, we have 
\begin{align}
(\mathcal{L}-q) Z^{(q)}(y) &=(\mathcal{L}-q) \Big(\overline{Z}^{(q)}(y) + \frac {\psi'(0+)} q \Big) = 0, \quad y > 0. \label{martingale_Z_R}
\end{align}
Applying these in \eqref{vf_new_label_x<b}, we obtain \eqref{405}. \par
(ii) Suppose $x > b$. 
By the proof of Lemma 5.2 in \cite{NobPerYamYan},
% \red{[check the lemma  number later. May have been changed]}\blue{[Kazu: I think is still the same, but I agree lets check later.]} 
we have
\begin{align}
({\cal{L}}-q)\bar{W}^{(q+r)}(x - b)
&=1+r \bar{W}^{(q+r)} (x - b), \notag\\
({\cal{L}}-q)\bar{\bar{W}}^{(q+r)} (x - b)
&=(x -b) +r\bar{\bar{W}}^{(q+r)}(x - b). \label{411}
\end{align}
%\red{The following is in fact not needed.  In \cite{EgaYam}, it is a standing assumption of the paper, but for this result, we don't need to confirm this. }Now we note that $Z^{(q)}$ is a continuous, non-decreasing, non-negative function on $(0, \infty)$,
%and satisfies 
%\begin{align*}
%\bE_x \sbra{\int_0^{\tau^-_0} e^{-(q+r)t}Z^{(q)}(X(t)+b^*)dt}
%&\leq \int_0^\infty e^{-rt}
%\bE_{x+b^*}\sbra{e^{-q (t \land  \tau^-_0)} 
%	Z^{(q)}(X(t \land \tau^-_0)) }dt \\
%&=\frac{1}{r}Z^{(q)}(x+b^*)<\infty.
%\end{align*}
%\blue{[Rewrote the proof, using the new expression \eqref{vf_new_label}].
On the other hand by the proof of Lemma 4.5 in \cite{EgaYam}, we have
\begin{multline*}
({\cal{L}}-(q+r))\rbra{\int_0^{x-b} W^{(q+r)}(y)Z^{(q)}(x-y)\diff y} \n \\
=({\cal{L}}-(q+r))\rbra{\int_0^{x-b} 
	W^{(q+r)}(x-b-y)Z^{(q)}(b + y)\diff y} =Z^{(q)}(x),
\end{multline*} 
%This implies that
and hence
\begin{align}
({\cal{L}}-q)\rbra{\int_0^{x-b} W^{(q+r)}(y)Z^{(q)}(x-y)\diff y}
= Z^{(q,r)}_{-b}(x-b).
%Z^{(q)}(x)+r\int_0^{x-b} W^{(q+r)}(y)Z^{(q)}(x-y)\diff y.  
 \label{416}
\end{align}
Combining \eqref{martingale_Z_R} and \eqref{416}, we obtain 
\begin{equation}\label{418_a}
({\cal{L}}-q)Z^{(q, r)}_{-b} (x - b)=rZ^{(q, r)}_{-b} (x - b).
\end{equation}
In a similar way, we see that 
\begin{align}
({\cal{L}}-q)\rbra{\int_0^{x-b} W^{(q+r)}(y)\bar{Z}^{(q)} (x-y)\diff y}=
\overline{Z}^{(q,r)}_{-b}(x-b),
%\bar{Z}^{(q)} (x)+
%r\int_0^{x-b} W^{(q+r)}(y)\bar{Z}^{(q)} (x-y)\diff y. 
\label{417}
\end{align}
%Hence using identities
and, using identities
 \eqref{martingale_Z_R} and \eqref{417}, we obtain 
\begin{align}%\label{421}
({\cal{L}}-q)\Big(\bar{Z}^{(q,r)}_{-b} (x-b)+\frac{\psi'(0+)}{q}&
%+r\int_0^{x-b} W^{(q+r)}(y)\bar{Z}^{(q)}(x-y)\diff y
\Big)
=r \overline{Z}^{(q,r)}_{-b}(x-b).
%\left(\bar{Z}^{(q)} (x)+r\int_0^{x-b} W^{(q+r)}(y)\bar{Z}^{(q)}(x-y)\diff y\right).   
\label{418}
\end{align}
%Using identities \eqref{martingale_Z_R}, \eqref{416} and \eqref{418}, we obtain
%\begin{align}
%({\cal{L}}-q)Z^{(q, r)}_{-b} (x - b)&=rZ^{(q, r)}_{-b} (x - b), 
%\notag\\
%%	l^{(q)}(x) + r\int_0^{x-b} W^{(q + r)}(y)
%	l^{(q)} (x - y) \diff y \right)&=r\left(
%	l^{(q)}(x ) + r\int_0^{x-b} W^{(q + r)}(y)
%	l^{(q)} (x - y) \diff y\right) .\notag
%\end{align}
%Using these identities 
%Applying these
%in \eqref{identities} we get
%\begin{align}
%({\cal{L}}-q)H^{(q, r)}_{-b} (x - b)=rH^{(q, r)}_{-b} (x - b). \label{421}
%\end{align}
%On the other hand, using \eqref{411} and \eqref{421},  
%\begin{align}
%({\cal{L}}-q)J^{(q, r)}_{-b} (x - b)=
%rJ^{(q, r)}_{-b} (x - b)
%-r Z^{(q)}(b). \label{422}
%\end{align}
Therefore, applying \eqref{411}, \eqref{418_a} and \eqref{418} in \eqref{vf_new_label}, 
\begin{align}
({\cal{L}}-q)v_{b}(x)&= -C_{b}\left(rZ^{(q, r)}_{-b} (x - b)
-r Z^{(q)}(b)\left(1+r\overline{W}^{(q+r)}(x-b)\right)\right)-r\left((x-b)+r\bar{\bar{W}}^{(q+r)}(x - b)\right)\notag\\
&-r\beta \overline{Z}^{(q)}(b)\left(1+r\overline{W}^{(q+r)} (x - b)\right)+r\beta \overline{Z}^{(q,r)}_{-b}(x-b) \notag \\
%\left(\bar{Z}^{(q)} (x)+r\int_0^{x-b} W^{(q+r)}(y)\bar{Z}^{(q)}(x-y)\diff y\right)\notag\\
&=-r\rbra{(x - b) + v_{b} (b)-v_{b}(x)},\notag
\end{align}
where in the last equality we used the fact that $v_{b}(b)= -C_{b}Z^{(q)}(b)+\displaystyle\beta \Big( \bar{Z}^{(q)} (b)+\frac{\psi^\prime(0+)}{ q}\Big)$.
\end{proof}
\begin{lemma}\label{cond_b^*}
%For $b^\ast \geq 0$, we
We
 have $1\leq v_{b^\ast}^\prime (x) \leq \beta$ for 
$x \in (0 , b^\ast)$ and $0\leq v_{b^\ast}^\prime (x)\leq 1$ for $x\in (b^\ast , \infty)$. 
\end{lemma}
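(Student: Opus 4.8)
The plan is to reduce the whole lemma to a single probabilistic identity for the derivative: in the principal case $b^* > 0$ I will show that
\begin{align*}
v_{b^*}'(x) = \beta\,\mathbb{E}_{x-b^*}\!\left[e^{-q\tau_{-b^*}^-(r)}\right], \qquad x\in\mathbb{R}.
\end{align*}
Granting this, all four inequalities are immediate: the right-hand side is $\beta$ times a discounted first-passage (``ruin'') functional, which lies in $[0,1]$ and is nonincreasing in the initial height $x$ by pathwise monotonicity of the Parisian-reflected process $X_r$ in its starting value. Moreover \eqref{fpt_pr} together with \eqref{cond_opt_fpt} and $g(b^*)=0$ give $\mathbb{E}_0[e^{-q\tau_{-b^*}^-(r)}]=1/\beta$, so $v_{b^*}'(b^*)=1$. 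Hence for $x\in(0,b^*)$ the starting height $x-b^*$ lies below $0$, forcing $\mathbb{E}_{x-b^*}[\cdots]\in[1/\beta,1]$ and thus $1\le v_{b^*}'(x)\le\beta$, while for $x\in(b^*,\infty)$ the starting height exceeds $0$, so $\mathbb{E}_{x-b^*}[\cdots]\in[0,1/\beta]$ and $0\le v_{b^*}'(x)\le 1$.

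The core of the argument is establishing the displayed identity. I would start from the explicit derivative \eqref{302a}, insert the simplified constant $C_{b^*}=(\beta Z^{(q)}(b^*)-1)/(qW^{(q)}(b^*))$ from Remark \ref{C_b_simplification}, and compare term by term with the expression \eqref{403} for $\mathbb{E}_{x-b^*}[e^{-q\tau_{-b^*}^-(r)}]$. Multiplying \eqref{403} by $\beta$ and subtracting, the coefficients of $Z^{(q,r)}_{-b^*}(x-b^*)$ and $\overline{W}^{(q+r)}(x-b^*)$ are arranged so that the difference collapses to
\begin{align*}
v_{b^*}'(x) - \beta\,\mathbb{E}_{x-b^*}\!\left[e^{-q\tau_{-b^*}^-(r)}\right]
= q\big(\beta\kappa - C_{b^*}\big)\Big(W^{(q,r)}_{-b^*}(x-b^*) - rW^{(q)}(b^*)\,\overline{W}^{(q+r)}(x-b^*)\Big),
\end{align*}
where $\kappa := Z^{(q)}(b^*,\Phi(q+r))/Z^{(q)\prime}(b^*,\Phi(q+r))$. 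It then suffices to see that the prefactor vanishes, i.e.\ $\beta\kappa=C_{b^*}$; but this is exactly the rearrangement of the defining equation $g(b^*)=0$: multiplying \eqref{g_0} by $\Phi(q+r)$ and solving for $\kappa$ yields $\kappa=(\beta Z^{(q)}(b^*)-1)/(\beta q W^{(q)}(b^*))=C_{b^*}/\beta$. As a consistency check, for $x\in(0,b^*)$ the Parisian scale functions in \eqref{403} reduce to $Z^{(q)}$ and $W^{(q)}$ since $\overline{W}^{(q+r)}$ vanishes on the negative axis, recovering $v_{b^*}'(x)=\beta Z^{(q)}(x)-qC_{b^*}W^{(q)}(x)$, which is \eqref{302a} restricted to that interval.

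I expect the main obstacle to be twofold. First, the term-by-term collapse to the single prefactor $q(\beta\kappa-C_{b^*})$ must be carried out carefully, tracking the three scale-function blocks in \eqref{302a} and \eqref{403}; the place where $g(b^*)=0$ enters is the crux, and it is what replaces the completely-monotone-density convexity argument of the earlier work without capital injection. Second, the monotonicity in $x$ of $\mathbb{E}_{x-b^*}[e^{-q\tau_{-b^*}^-(r)}]$—intuitively clear but central to the conclusion—needs a clean justification, e.g.\ by coupling two copies of $X_r$ driven by common Poisson marks and L\'evy increments and checking that Parisian reflection from above preserves the ordering of the two initial conditions, whence the larger start has the larger passage time $\tau_{-b^*}^-(r)$ and the smaller discounted value.

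Finally, the boundary case $b^*=0$ requires a short separate treatment: region $(0,b^*)$ is vacuous, so only $0\le v_0'(x)\le 1$ on $(0,\infty)$ is at stake. Here $g(0)\le 0$ gives $v_0'(0+)\le 1$ by Remark \ref{remark_slope_at_b}(ii) and $\mathbb{E}_0[e^{-q\tau_0^-(r)}]\le 1/\beta$ by \eqref{cond_opt_fpt}; the same comparison as above—now using Remark \ref{identities_LRZ} to simplify \eqref{302a} at $b=0$ and controlling the sign of the residual term—combined with monotonicity of the ruin functional yields $0\le v_0'(x)\le v_0'(0+)\le 1$.
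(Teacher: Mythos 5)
Your treatment of the main case $b^* > 0$ is correct and is in substance the same as the paper's: the identity $v_{b^*}'(x) = \beta\,\mathbb{E}_{x-b^*}\big[e^{-q\tau_{-b^*}^-(r)}\big]$ is exactly the paper's display \eqref{404}, obtained there in the same way, by inserting $C_{b^*}=(\beta Z^{(q)}(b^*)-1)/(qW^{(q)}(b^*))$ into \eqref{302a} and matching against \eqref{403}, with $g(b^*)=0$ entering through the rearrangement $q Z^{(q)}(b^*,\Phi(q+r)) W^{(q)}(b^*)/Z^{(q)\prime}(b^*,\Phi(q+r)) = (\beta Z^{(q)}(b^*)-1)/\beta$; the four inequalities then follow from boundedness and monotonicity of the ruin functional exactly as you say. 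Your coupling justification of that monotonicity is a genuine (small) improvement: the paper simply asserts that $v_{b^*}'$ is decreasing.

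The gap is in the case $b^*=0$. Your plan there --- decompose $v_0'(x)=\beta\,\mathbb{E}_x\big[e^{-q\tau_0^-(r)}\big]+\rho(x)$, control the sign of the residual $\rho$, and invoke monotonicity of the ruin functional --- cannot deliver the upper bound $v_0'(x)\le 1$. Carrying out your own comparison at $b=0$ (using Remark \ref{identities_LRZ}, $Z^{(q)}(0)=1$, and \eqref{g_0}) gives
\begin{align*}
\rho(x) \;=\; \frac{-r\,g(0)}{\Phi(q+r)-rW^{(q)}(0)}\Big(W^{(q+r)}(x)-\Phi(q+r)\,\overline{W}^{(q+r)}(x)\Big),
\end{align*}
where $\Phi(q+r)-rW^{(q)}(0)=Z^{(q)\prime}(0,\Phi(q+r))>0$. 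Since $g(0)\le 0$ when $b^*=0$, and since $W^{(q+r)}-\Phi(q+r)\overline{W}^{(q+r)}$ is nonnegative and nondecreasing (its derivative is $W^{(q+r)\prime}(x+)-\Phi(q+r)W^{(q+r)}(x)\ge 0$ by the log-derivative bound you cite), the residual is \emph{nonnegative and nondecreasing}, strictly positive when $g(0)<0$. So $v_0'$ is the sum of a nonincreasing term $\beta\,\mathbb{E}_x[\cdot]$ and a nondecreasing term $\rho$: you get the lower bound $v_0'\ge 0$ for free, but neither $v_0'(x)\le v_0'(0+)$ nor $v_0'(x)\le 1$ follows, and the chain $v_0'\le\beta\,\mathbb{E}_x[\cdot]\le\beta\,\mathbb{E}_0[\cdot]\le 1$ that you would need runs in the wrong direction. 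The paper closes this case by a different argument: using Remark \ref{identities_LRZ} it writes
\begin{align*}
v_0'(x) \;=\; \frac{r(\beta-1)+q\beta}{r+q}\Big(Z^{(q+r)}(x)-\tfrac{r+q}{\Phi(q+r)}W^{(q+r)}(x)\Big)+\frac{r}{r+q},
\end{align*}
differentiates once more, and uses the same bound $W^{(q+r)\prime}(x+)/W^{(q+r)}(x)\ge\Phi(q+r)$ to get $v_0''(x+)\le 0$, i.e.\ concavity; then $v_0'(x)\le v_0'(0+)\le 1$ by Remark \ref{remark_slope_at_b}, and $v_0'(x)\ge\lim_{y\uparrow\infty}v_0'(y)=r/(r+q)>0$. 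You need to replace your sketch of the $b^*=0$ case by an argument of this kind (or some other proof of concavity); as written, that case is not proved.
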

\begin{proof}
We prove separately for the cases (i) $b^* > 0$ and (ii) $b^* = 0$.

%	We will split the proof into two cases, that of $b^*>0$ and that of $b^*=0$.\\
(i) Suppose $b^*>0$. %\blue{[Kazu: wrote it like this to avoid defining $\xi_{b^*}$] 
Then, using \eqref{302a} and  
%\begin{align*}
%&v'_{b^*}(x) =\beta Z^{(q)}(x) +\frac{rW^{(q)}(x)}{\Phi(q+r)Z^{(q)} (b^\ast , \Phi (q + r))}
%\rbra{1 - \beta Z^{(q)}(b^\ast)}
%-\frac{\beta q}{\Phi (q+r)}W^{(q)}(x)\\
%&+r\int_0^{x-b^*}W^{(q+r)}(y)\left(\beta Z^{(q)}(x-y) +\frac{rW^{(q)}(x-y)}{\Phi(q+r)}
%\frac{\rbra{1 - \beta Z^{(q)}(b^\ast)}}{Z^{(q)} (b^\ast , \Phi (q + r))}
%-\frac{\beta q}{\Phi (q+r)}W^{(q)}(x-y)-1\right)dy.
%&=\beta Z^{(q)}(x)-1 +\frac{W^{(q)}(x)}{W^{(q)}(b^\ast)}(1-\beta Z^{(q)}(b^\ast)), \label{430}
%\end{align*}
%\red{JL:  Probably the above does not make sense any more.  But the following holds by 
Remark \ref{C_b_simplification},
%where \eqref{430} uses \eqref{402}. 
%Suppose $x \in (0, b^\ast]$. 
%Now, using the fact that  $g(b^*)=0$ 
we obtain
\begin{align}\label{431}
v'_{b^*}(x)
&=\beta Z^{(q,r)}_{-b^*}(x-b^*) -r\overline{W}^{(q+r)}(x-b^\ast)-\frac{\beta Z^{(q)}(b^*)-1}{W^{(q)}(b^*)} W^{(q,r)}_{-b^*}(x-b^*).%\notag
\end{align}
%Now using Corollary 3.3 in \cite{PerYam2} we have, that if we define $X^{r}$ the L\'evy process with Parisian reflection above at the level $0$, and 
%\[
%\tau_{-b^*}(r)=\inf\{t>0: X_r(t)<-b^*\},
%\]
%then for any $x\in\mathbb{R}$
%\begin{align}\label{403}
%\mathbb{E}_x\left[e^{-\tau_{-b^*}(r)}\right]&=
%J_{-b^\ast}^{(q,r)}(x)-qI_{-b^*}^{(q,r)}(x)\frac{Z^{(q)}(b^*,\Phi(q+r))}{Z^{(q)\prime}(b^*,\Phi(q+r))}W^{(q)}(b^*)\notag\\
%&=Z_{-b^*}^{(q,r)}(x)-rZ^{(q)}(b^*)\overline{W}^{(q+r)}(x)\notag\\&-q\frac{Z^{(q)}(b^*,\Phi(q+r))}{Z^{(q)\prime}(b^*,\Phi(q+r))}W^{(q)}(b^*)\left(\frac{W^{(q,r)}_{-b^{*}}(x)}{W^{(q)}(b^*)}-r\overline{W}^{(q+r)}(x)\right).
%\end{align}
%\red{[simplified below here. Can you check?]} 
By the second equality of \eqref{g_0} and the fact that $g(b^*)=0$, we obtain
\begin{align*}
q\frac{Z^{(q)}(b^*,\Phi(q+r))}{Z^{(q)\prime}(b^*,\Phi(q+r))}W^{(q)}(b^*)=\frac{\beta Z^{(q)}(b^*)-1}{\beta }.
\end{align*}
%\begin{align*}
%&=Z_{-b}^{(q,r)}(x-b)-rZ^{(q)}(b)\overline{W}^{(q+r)}(x-b)\notag\\&-q\frac{Z^{(q)}(b,\Phi(q+r))}{Z^{(q)\prime}(b,\Phi(q+r))}\left(\blue{W^{(q,r)}_{-b}(x-b)}-rW^{(q)}(b) \overline{W}^{(q+r)}(x-b)\right),
%\end{align*}
Hence using the above expression and \eqref{431}  in \eqref{403} we obtain that, for $x > 0$, 
%\red{[check: Should not use $W^{(q,r)}_{-b^{*}}$]}\blue{[Changed first line to avoid using $W^{(q,r)}_{-b^{*}}$]}
%\red{[I guess we can just start with $\mathbb{E}_{x-b^*}\left[e^{-q\tau^-_{-b^*}(r)}\right]]$ and combine the two equations below?}
%\blue{[Combined the previous expressions into this one, and simplified a bit.]}
%\blue{[Simplified a bit, can you check?]}
\begin{align}\label{404}
\begin{split}
\beta\mathbb{E}_{x-b^*}\Big[&e^{-q\tau^-_{-b^*}(r)}\Big]=\beta Z_{-b^*}^{(q,r)}(x-b^*)-r\beta Z^{(q)}(b^*)\overline{W}^{(q+r)}(x-b^*) \\&-\frac{\beta Z^{(q)}(b^*)-1}{W^{(q)}(b^*)} \left(W^{(q,r)}_{-b^*}(x-b^*)-rW^{(q)}(b^*) \overline{W}^{(q+r)}(x-b^*)\right) \\
%&=\frac{1}{\beta}\Bigg(\beta Z_{-b^*}^{(q,r)}(x-b^*)-(\beta Z^{(q)}(b^*)-1)\frac{W^{(q,r)}_{-b^{*}}(x-b^*)}{W^{(q)}(b^*)} -r\overline{W}^{(q+r)}(x-b^*)\bigg)\notag\\
%&=\frac{1}{\beta}\Bigg[\beta\left(Z^{(q)}(x+b^*)+r\int_0^{x}W^{(q+r)}(x-y)Z^{(q)}(y+b^*)dy\right)-r\overline{W}^{(q+r)}(x)\\
%&+\frac{(1-\beta Z^{(q)}(b^*))}{W^{(q)}(b^*)}\left(W^{(q)}(x+b^*)+r\int_0^{x}W^{(q+r)}(x-y)W^{(q)}(y+b^*)dy\right)\Bigg].
%\end{align*}
%This implies that
%\begin{align}
%\mathbb{E}_{x-b^*}\left[e^{-q\tau^-_{-b^*}(r)}\right]
%&=\beta\left(Z^{(q)}(x)+r\int_0^{x-b^{*}}W^{(q+r)}(x-b^*-y)Z^{(q)}(y+b^*)\diff y\right)-r\overline{W}^{(q+r)}(x-b^\ast)\notag\\
%&-\frac{\beta Z^{(q)}(b^*)-1}{W^{(q)}(b^*)}\left(W^{(q)}(x)+r\int_0^{x-b^*}W^{(q+r)}(x-b^*-y)W^{(q)}(y+b^*)\diff y\right)\notag\\
%&=\beta\left(Z^{(q)}(x)+r\int_0^{x-b^{*}}W^{(q+r)}(y)Z^{(q)}(x-y)\diff y\right)-r\overline{W}^{(q+r)}(x-b^\ast)\notag\\
%&-\frac{\beta Z^{(q)}(b^*)-1}{W^{(q)}(b^*)}\left(W^{(q)}(x)+r\int_0^{x-b^*}W^{(q+r)}(y)W^{(q)}(x-y)\diff y\right)\notag\\
%&=\xi_{b^*}(x)+1+r\int_0^{x-b^*}W^{(q+r)}(y)\xi_{b^*}(x-y)\diff y\notag\\
&=v'_{b^*}(x),
\end{split}
\end{align}
%\blue{[Remove the second inequality?]}
where the last inequality follows from \eqref{431}. 

%\red{JL: I think it is better to obtain before this lemma
%\begin{align}
%v'_{b^*}(x)
%&=\beta\left(Z^{(q)}(x)+r\int_0^{x-b^{*}}W^{(q+r)}(y)Z^{(q)}(x-y)\diff y\right)-r\overline{W}^{(q+r)}(x-b^\ast)\notag\\
%&\red{-\frac{\beta %Z^{(q)}(b^*)-1}{W^{(q)}(b^*)}}\left(W^{(q)}(x)+r\int_0^{x-b^*}W^{(q+r)}(y)W^{(q)}(x-y)\diff y\right)\notag
%\end{align}
%so that we don't need to define $\xi$?
%}

%\red{[maybe it is clearer to start with $\beta \mathbb{E}_{x-b^*}\left[e^{-q\tau^-_{-b^*}(r)}\right] = ...$]}

%\red{[I think we can safely remove this?]}We then have that for $x>0$
%\[
%v'_{b^*}(x)=\beta\mathbb{E}_{x-b^*}\left[e^{-q\tau_{-b^*}^-(r)}\right].
%\]
From \eqref{404},  we then deduce that $0\leq v'_{b^*}(x)\leq \beta$ and that $v_{b^*}'$ is decreasing on $(0,\infty)$. This and the fact that $v'_{b^*}(b^*)=1$ as in Remark \ref{slope_b} complete the proof.

(ii) Suppose that $b^*=0$ (then necessarily $X$ is of bounded variation by Lemma \ref{lemma_criteria_zero}). 
	%By  \eqref{C_b} and \eqref{302a}},
%\begin{align}\label{der_0}
%&v_0'(x)=\frac{r}{\Phi(q+r)}\left(W^{(q)}(x)+r\int_0^xW^{(q+r)}(y)W^{(q)}(x-y)\diff y\right)-r\overline{W}^{(q+r)}(x)\notag\\
%&+\beta\left(Z^{(q)}(x)+r\int_0^xW^{(q+r)}(y)Z^{(q)}(x-y)\diff y\right)-\frac{\beta(r+q)}{\Phi(q+r)}\left(W^{(q)}(x)+r\int_0^xW^{(q+r)}(y)W^{(q)}(x-y)\diff y\right)\notag\\
%&=\frac{r}{\Phi(q+r)}W^{(q+r)}(x)-r\overline{W}^{(q+r)}(x)+\beta Z^{(q+r)}(x)-\frac{\beta(r+q)}{\Phi(q+r)}W^{(q+r)}(x)\notag\\
%&=\left(\beta-\frac{r}{r+q}\right)\left( Z^{(q+r)}(x)-\frac{r+q}{\Phi(q+r)}W^{(q+r)}(x)\right)+\frac{r}{r+q},
%\end{align}
%where in the second equality we used the identities given in (5) of \cite{LRZ}. 
%\red{[How about this instead?] Because
Because
\begin{align*}
C_0 = \frac {r(\beta-1) + q \beta} {q \Phi(q+r)},
\end{align*}
we have, by \eqref{302a} and Remark \ref{identities_LRZ},
		\begin{align}\label{der_0}
		v_0^{\prime} (x) &= - \frac {r(\beta-1) + q \beta} {\Phi(q+r)} W^{(q,r)}_0(x)-r\overline{W}^{(q+r)}(x)+\beta Z^{(q,r)}_{0}(x) \notag\\
%&=  \frac {r(1-\beta) - q \beta} {\Phi(q+r)} W^{(q+r)}(x) -r\overline{W}^{(q+r)}(x) +\beta Z^{(q+r)}(x) \notag\\
&=%\left(\beta-\frac{r}{r+q}\right)
\frac {r(\beta-1) + q \beta} {r+q}
\left( Z^{(q+r)}(x)-\frac{r+q}{\Phi(q+r)}W^{(q+r)}(x)\right)+\frac{r}{r+q}.
\end{align}
%\red{[ok to remove the first equality?]}

Differentiating  \eqref{der_0} further, 
\begin{align}\label{sec_der_0}
v_0''(x+)=%\frac{
(r(\beta-1) + q \beta)
%}{\Phi(q+r)}W^{(q+r)}(x)
\left(1-\frac{1}{\Phi(q+r)}\frac{W^{(q+r)\prime}(x+)}{W^{(q+r)}(x)}\right)W^{(q+r)}(x).
\end{align}
%Identity \eqref{criteria_b_0} 
Because $\beta > 1$, we have $r(\beta-1) + q \beta  > 0$. In addition, $x \mapsto W^{(q+r)\prime}(x+) /W^{(q+r)}(x) $ is monotonically decreasing in $x$ as in (8.18) and Lemma 8.2 of \cite{Kyp} and converges to $\Phi(r+q)$ as $x \rightarrow \infty$.  By these facts and \eqref{sec_der_0}, we have that  $v_0''(x+) < 0$, meaning $v_0$ is concave.
%\par %On the other hand, \eqref{der_0} implies that
%\[
%v_0'(0+)=\frac{W^{(q+r)}(0)}{\Phi(q+r)}(r-\beta(r+q))+\beta=g(0)+1\leq 1,
%\]
%where the last inequality follows \blue{by Remark \ref{remark_slope_at_b}.}  \red{I think we can delete from ``On the other hand, ..." and just say ``
\par Recall, as in Remark \ref{remark_slope_at_b}, that $v_0'(0+) \leq 1$. Hence we have that $v_0'(x)\leq 1$ for all $x\in(0,\infty)$. Finally, we have $v_0'(x) \xrightarrow{x \uparrow \infty} r/(r+q) > 0$ because $ Z^{(q+r)}(x) -  {(r+q)}  W^{(q+r)}(x) /  {\Phi(r+q)}  $ vanishes in the limit by Theorem 8.1 (ii) of \cite{Kyp}, and hence $v'_0(x) > 0$.
\end{proof}
%\begin{remark} \label{lemma_bound}
%By \eqref{vf_new_label}, we have %\red{[rewrote the below to be more clear.]}
%	\[
%	v_{b^*}(0)=-\frac{r}{q\Phi(q+r)}\frac{\beta Z^{(q)}(b^*)-1}{Z^{(q)}(b^*,\Phi(q+r))}+\beta \Big( \frac{\psi'(0+)}{q} - \frac 1 {\Phi(q+r)} \Big).
%	\]
%	\red{[I guess we don't have to specifically obtain the value of $v_{b^*}(0)$ because it is clear that it is finite?]}
%	Hence, by the monotonicity of $v_{b^*}$ in view of Lemma \ref{cond_b^*} and \eqref{mean-finite},  we have $\inf_{x\geq0}v_{b^*}(x) \geq  v_{b^*}(0) > -\infty$.
%\end{remark}
Next, 
by applying
%by an application o
 Lemma \ref{cond_b^*} for $b^* > 0$ and $b^* = 0$, the following results are immediate.%we obtain the following result.
\begin{lemma}\label{cond_max_v}
	For $b^*\geq0$, we have that
	\begin{equation*}%\label{max_cond}
	\max_{0\leq l\leq x} \{ l+v_{b^*}(x-l)-v_{b^*}(x) \} =
	\begin{cases} 0 &\mbox{if } x \in[0,b^*], \\ 
	x-b^*+v_{b^*}(b^*)-v_{b^*}(x) & \mbox{if } x \in (b^*,\infty). \end{cases}
	\end{equation*}
\end{lemma}
We are now ready to show Theorem \ref{main_theorem}.
\begin{proof}[Proof of Theorem \ref{main_theorem}]
We shall show that $v_{b^*}$ satisfies all the conditions given in Proposition  \ref{verificationlemma}.
%We have that the
The desired smoothness/continuity of $v_{b^*}$ holds by Lemma \ref{lemma_smooth_fit} and Remark \ref{remark_smoothness_zero}. Hence, we only need to prove the variational inequalities given in \eqref{v1}.
\par Lemmas \ref{ineq_gen} and \ref{cond_max_v} yield the validity of the first item of \eqref{v1} with equality. The second item holds by Lemma \ref{cond_b^*}. %Finally, the third item follows from Remark \ref{lemma_bound}. \red{[Instead of writing Remark \ref{lemma_bound}, is it ok to just change the sentence here to ``
Finally, the third item follows because, by the monotonicity of $v_{b^*}$ in view of Lemma \ref{cond_b^*} and \eqref{mean-finite},  we have $\inf_{x\geq0}v_{b^*}(x) \geq  v_{b^*}(0) > -\infty$.
\end{proof}

\section{Numerical results} \label{section_numerics}
%\red{[added this]}

In this section, we give numerical results using the  spectrally negative \lev process with phase-type jumps of the form:
\begin{equation*}
 X(t) - X(0)= c t+\eta B(t) - \sum_{n=1}^{N(t)} Z_n, \quad 0\le t <\infty, %\label{X_phase_type}
\end{equation*}
where $B=( B(t); t\ge 0)$ is a standard Brownian motion, $N=(N(t); t\ge 0 )$ is a Poisson process with arrival rate $1$, and  $Z = ( Z_n; n = 1,2,\ldots )$ is an i.i.d.\ sequence of phase-type random variables that approximate the (folded) Normal distribution with mean zero and variance $1$ (the phase-type parameters are given in \cite{leung2015analytic}). See, e.g., \cite{Maier} for a review of the phase-type distribution. The processes $B$, $N$, and $Z$ are assumed to be mutually independent.  We refer the reader to \cite{Egami_Yamazaki_2010_2, KKR} for the forms of the corresponding scale functions. Throughout we set $q = 0.05$.

% for the parameters of the phase-type distribution and also \cite{Egami_Yamazaki_2010_2} 

 \begin{figure}[htbp]
\begin{center}
\begin{minipage}{1.0\textwidth}
\centering
\begin{tabular}{cc}
 \includegraphics[scale=0.35]{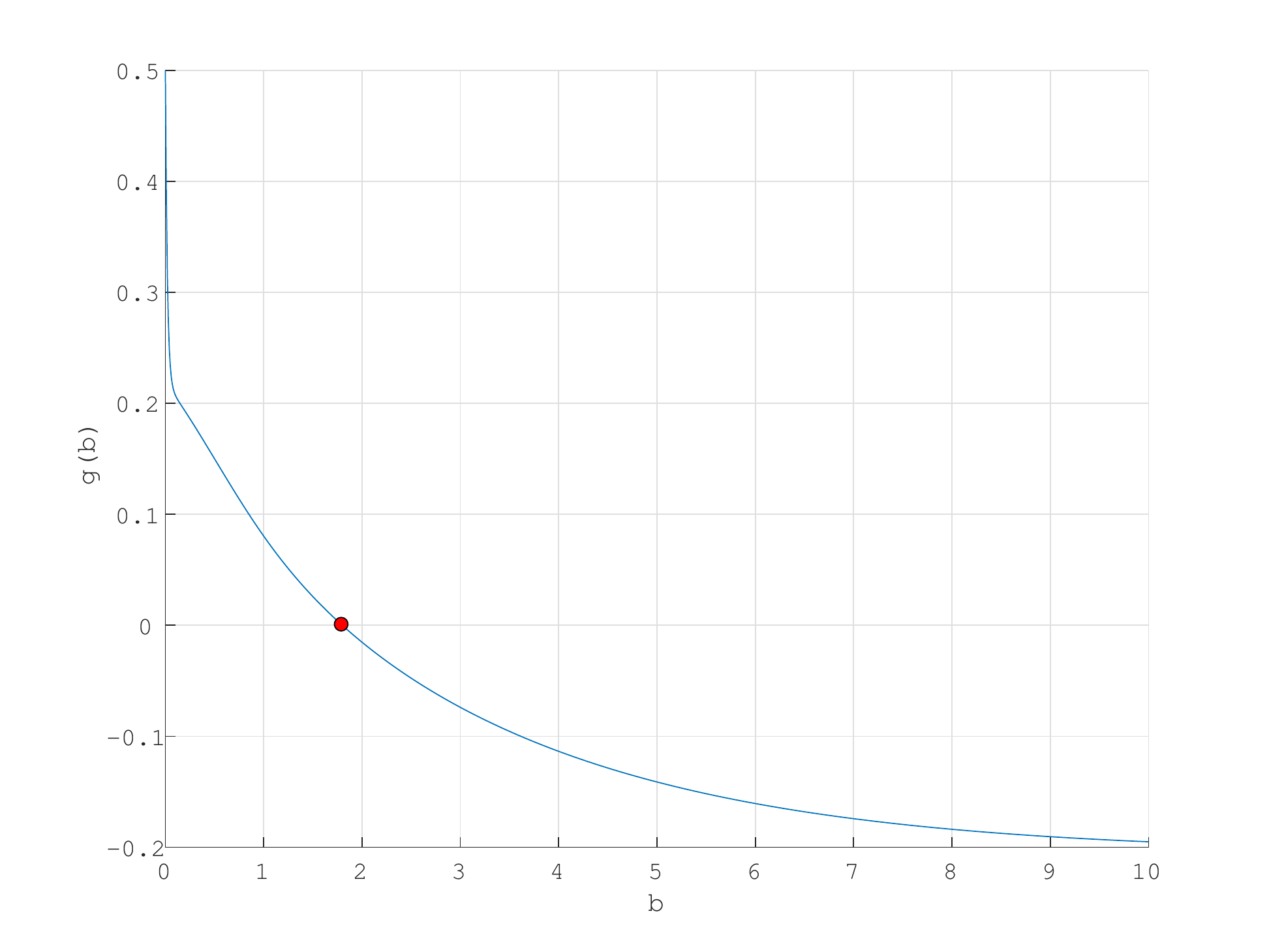} & \includegraphics[scale=0.35]{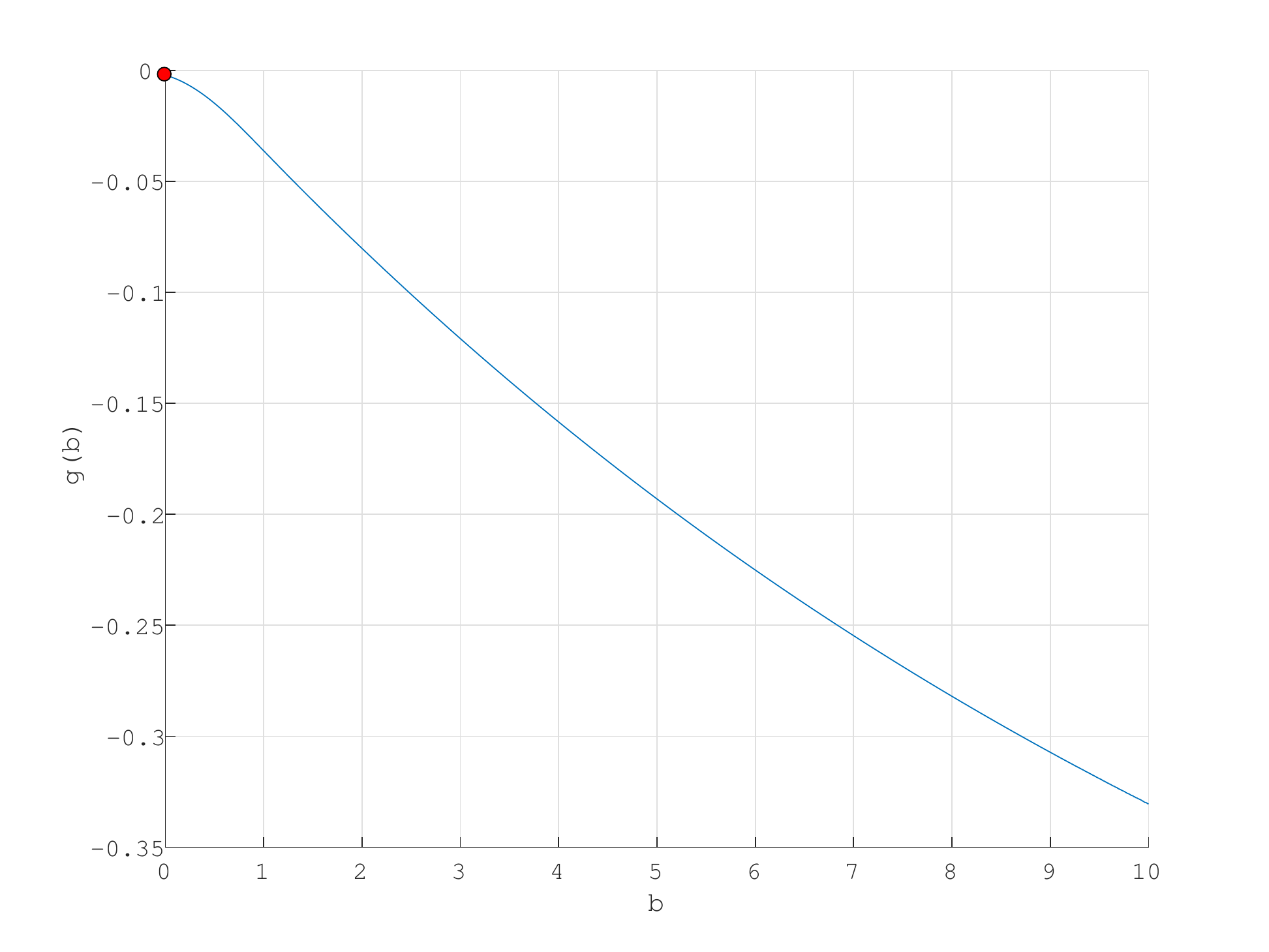}  \\
 \textbf{Case} 1 & \textbf{Case} 2 \end{tabular}
\end{minipage}
\caption{Plots of $b \mapsto g(b)$  for \textbf{Cases} 1 and 2.  The values of $b^*$ are indicated by the circles.
} \label{figure_g}
\end{center}
\end{figure}

 \begin{figure}[htbp]
\begin{center}
\begin{minipage}{1.0\textwidth}
\centering
\begin{tabular}{cc}
 \includegraphics[scale=0.35]{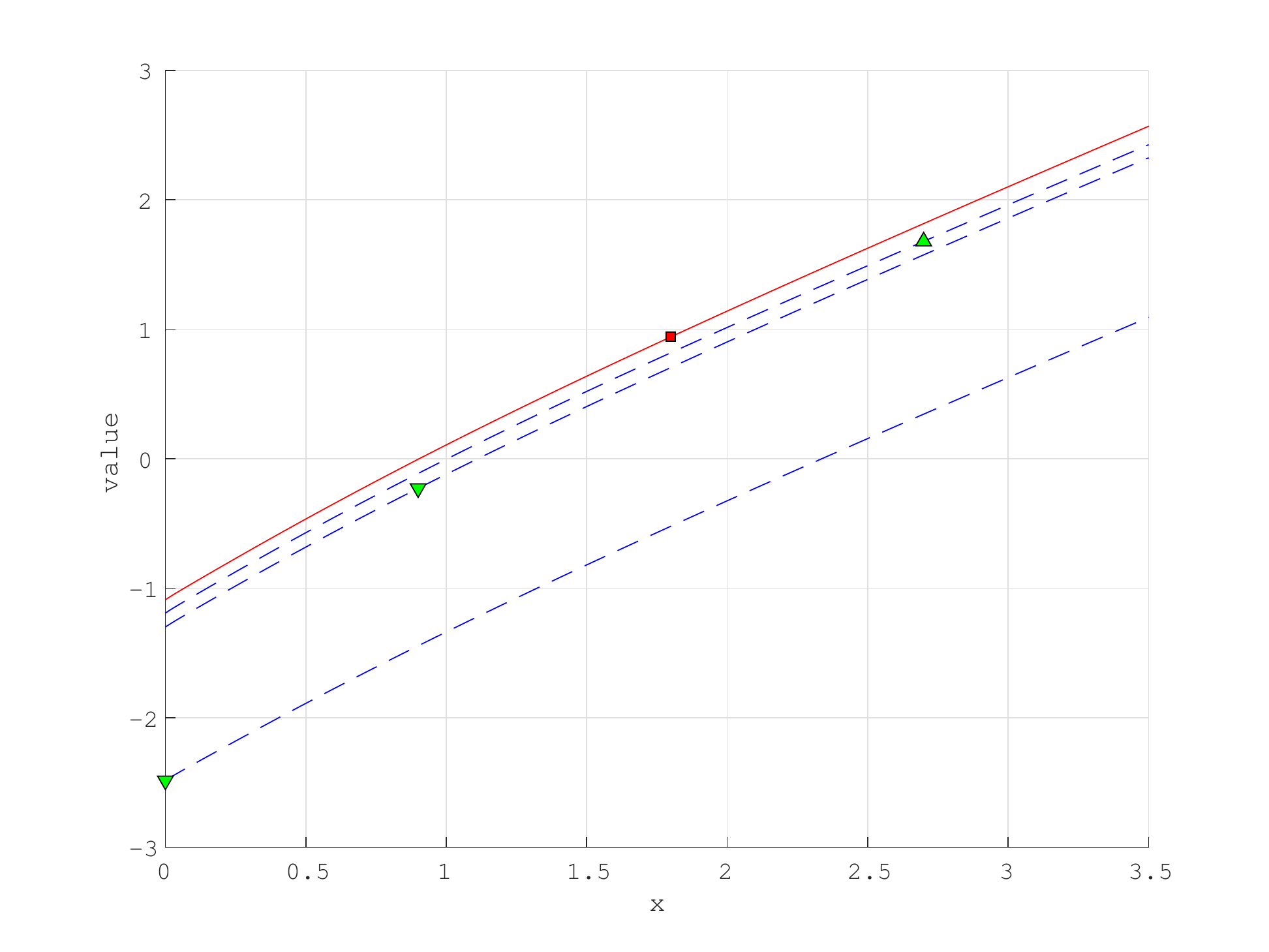} & \includegraphics[scale=0.35]{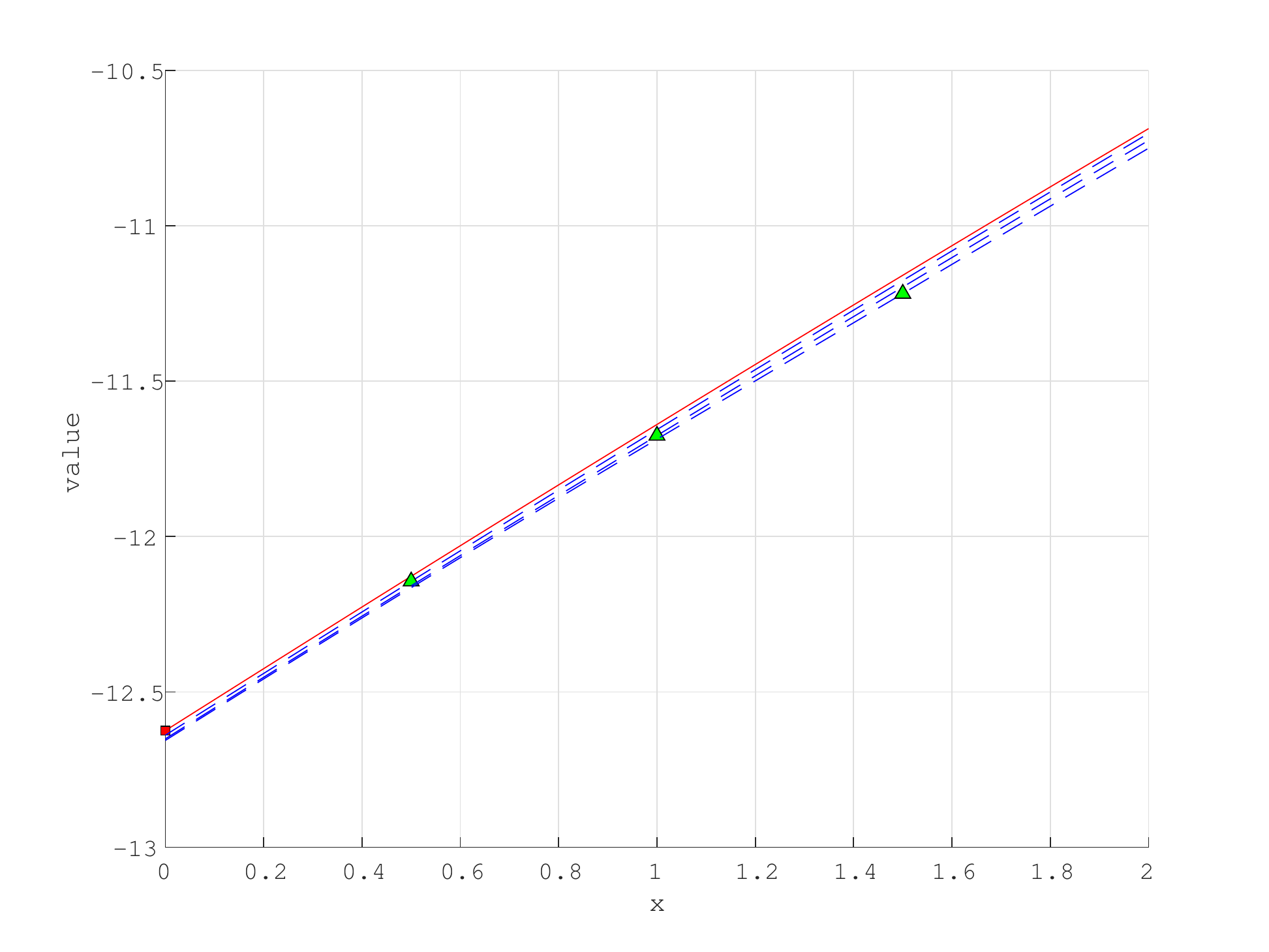}  \\
 \textbf{Case} 1& \textbf{Case} 2 \end{tabular}
\end{minipage}
\caption{Plots of $v_{b^*}$ (solid)   for \textbf{Cases} 1 and 2 in comparison to $v_b$ (dotted) with $b = 0, b^*/2, 3b^*/2$ for \textbf{Case} 1 and  $b = 0.5, 1,1.5$ for \textbf{Case} 2.
The points $(b^*, v_{b^*}(b^*))$ are indicated by the squares and the points $(b, v_{b}(b))$ are  indicated by the down-pointing (resp.\ up-pointing) triangles  if $b < b^*$ (resp.\ $b > b^*$).
} \label{figure_values}
\end{center}
\end{figure}

We first illustrate the implementation procedure using  \textbf{Case} 1 (unbounded variation) with $\eta = 0.2$, $c =1$ and $\beta = 1.5$ and \textbf{Case} 2  (bounded variation) with $\eta = 0$, $c = 0.3$ and $\beta = 1.05$ with the common value of $r = 0.5$.

Recall the definition of $b^*$ as in \eqref{defbthreshold}.  In Figure \ref{figure_g}, the function $g(b)$ as in \eqref{g_0} is plotted for \textbf{Cases} 1 and 2. As we have studied in Remark  \ref{remark_g_probabilistic} and  Lemma \ref{lemma_criteria_zero}, if $g(0) > 0$ as in \textbf{Case} 1, there exists a unique value $b^*$ such that $g(b^*) = 0$, and hence this can be computed by the bisection method.  For the case $g(0) \leq  0$ as in \textbf{Case} 2, we set $b^* = 0$.
Using the selected $b^*$, the optimal value functions $v_{b^*}$ are computed and plotted in Figure \ref{figure_values} for both \textbf{Cases} 1 and 2.  In the same plots, in order to confirm the optimality, we plot the function $v_b$ for different selection of $b$.  It is confirmed that $v_{b^*}$ dominates $v_b$, for $b \neq b^*$, uniformly in $x$.

%Recall that the optimal threshold $b^*$ is given by \eqref{defbthreshold}. In Figure \ref{plot_g_h}, we plot the function $b \mapsto g(b)/h(b)$ (recall that $h$ is uniformly positive) for various values of $\beta$ for \textbf{Cases} 1 and 2.  For the case $g(0) \leq 0$ (and hence $g(0)/h(0) \leq 0$), we have $b^* = 0$. Otherwise, $g/h$ is monotonically decreasing and $b^*$ becomes the value at which $g$ (and $g/h$) vanishes. As observed in Lemma \ref{lemma_criteria_zero}, for the case of unbounded variation, $b^* > 0$ for any value of $\beta > 1$ while in the bounded variation case, $b^* = 0$ if $\beta$ is sufficiently close to $1$.  In order to confirm the optimality of the selected threshold strategy $\pi^{b^*}$,
%we plot  in Figure \ref{plot_optimality} (for $\beta = 1.5$)  the value function $v_{b^*}$  together with $v_b$ for $b \neq b^*$.  It is illustrated in the figure that $v_{b^*}$ satisfactorily dominates $v_b$ uniformly in $x$.

Figure \ref{fig_sensitivity} shows the behaviors of the optimal solutions with respect to the parameters $\beta$ and $r$ using the same parameters as \textbf{Case} 1 (unless stated otherwise for the values of $\beta$ and $r$).  The left plot shows  $v_{b^*}$ for $\beta$ ranging from $1.01$ to $20$.
As expected, $v_{b^*}$ is decreasing in $\beta$ uniformly in $x$.  In addition, we observe that $b^*$ increases as $\beta$ increases.  The right plot shows $v_{b^*}$ for various values of $r$ ranging from $0.0001$ to $50$ along with the results in the classical bail-out case (without the restriction \eqref{restriction_poisson}), say $\bar{v}_{b^\dagger}$ with the optimal classical barrier $b^\dagger$, as in \cite{AvrPalPis}. It is observed that the value function converges increasingly to that in \cite{AvrPalPis}.  It is also confirmed that $b^*$ increases in $r$ and converges to $b^\dagger$ of \cite{AvrPalPis} as $r \rightarrow \infty$.
 \begin{figure}[htbp]
\begin{center}
\begin{minipage}{1.0\textwidth}
\centering
\begin{tabular}{cc}
 \includegraphics[scale=0.35]{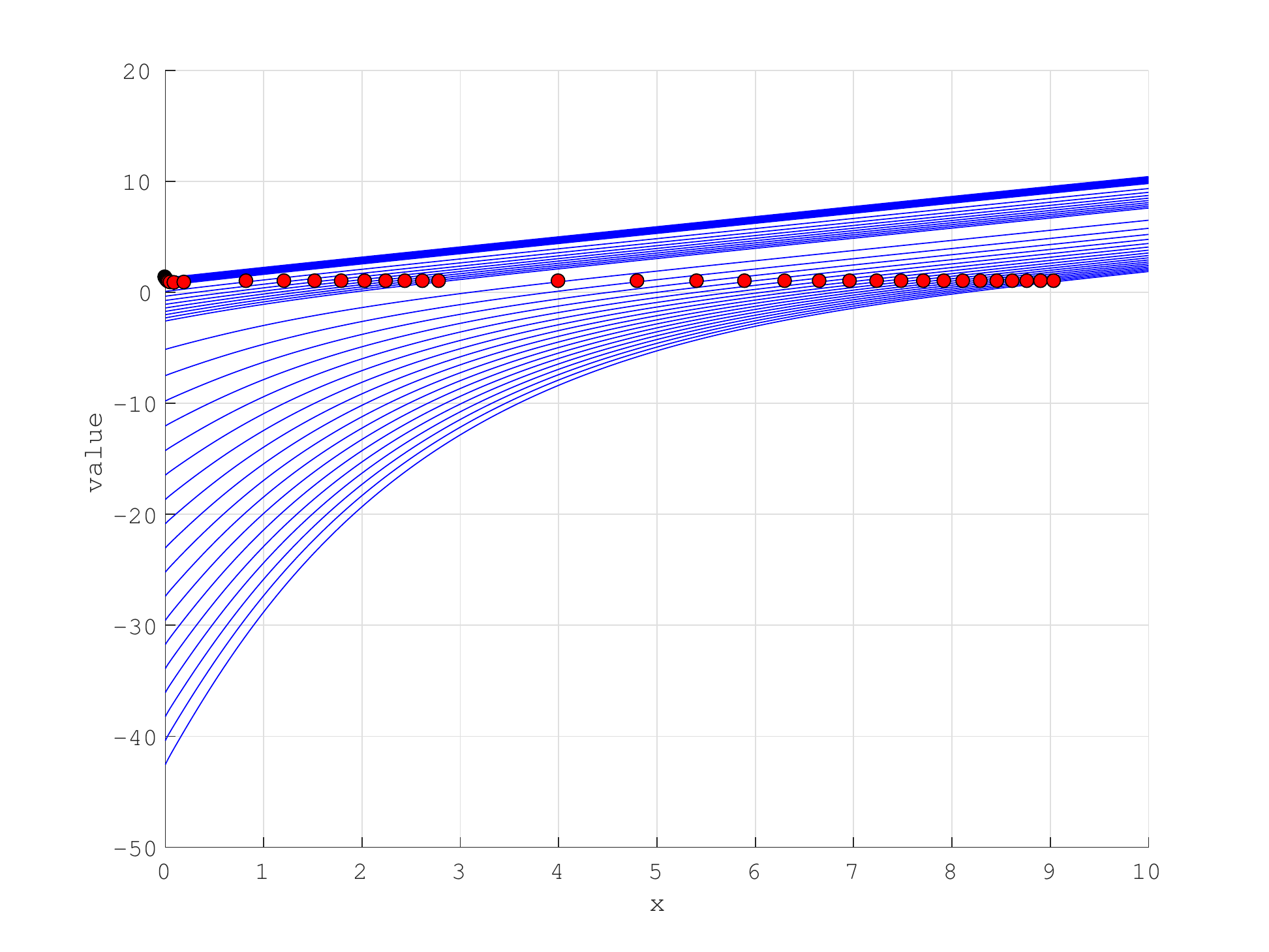} & \includegraphics[scale=0.35]{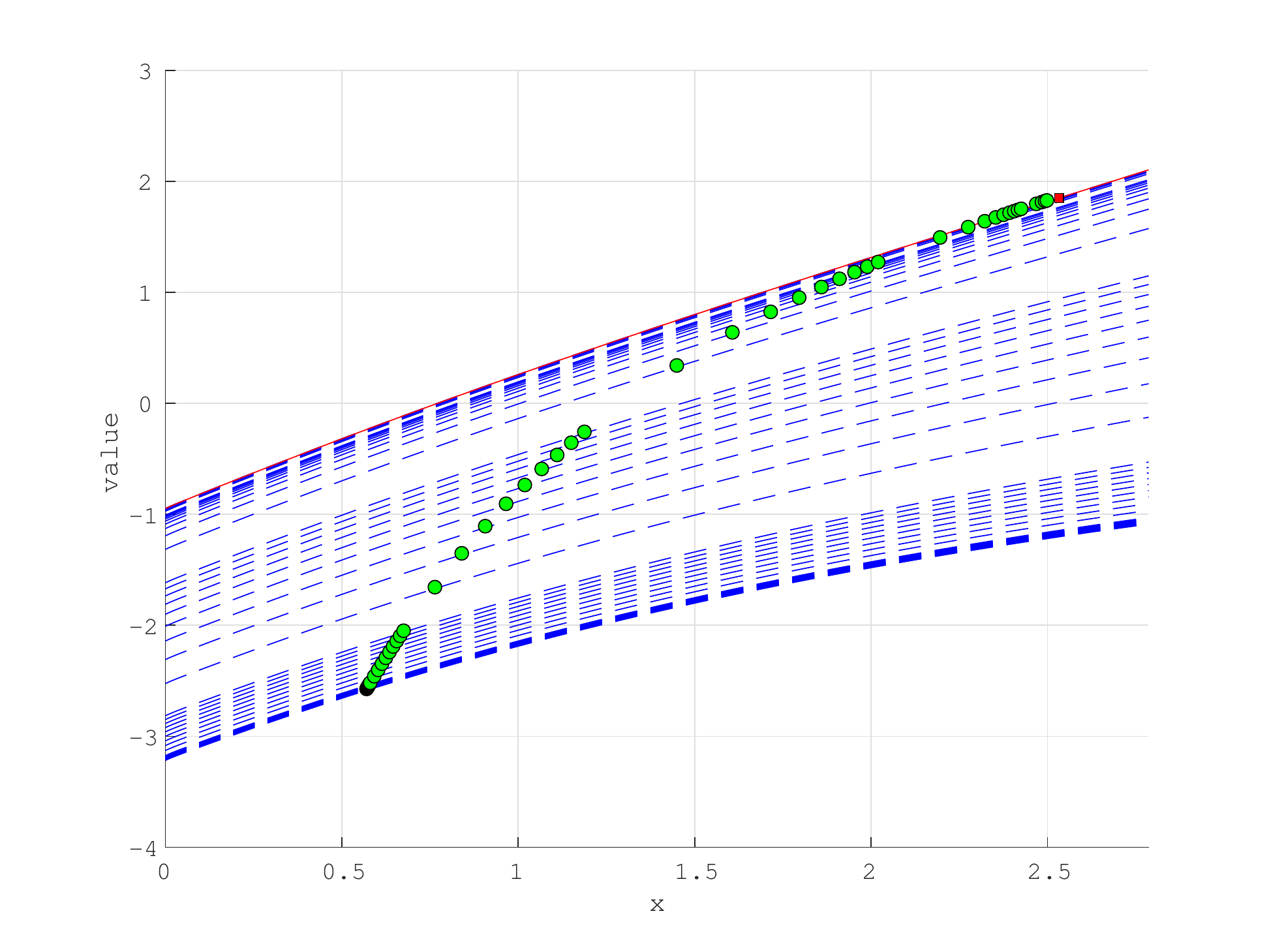}  \\
 with respect to $\beta$ & with respect to $r$ \end{tabular}
\end{minipage}
\caption{(Left) Plots of $v_{b^*}$ for $\beta=1.01$, $1.02$, $\ldots$, $1.09$, $1$, $1.1$, $1.2$, $\ldots$, $1.9$, $2$, $3$, $\ldots$, $19$, $20$ with the points $(b^*, v_{b^*}(b^*))$ indicated by the circles. (Right) Plots of $v_{b^*}$ (dotted) for $r=0.0001$, $0.0002$, $\ldots$, $0.0009$, $0.001$, $0.002$, $\ldots$, $0.009$, $0.01$, $0.02$, $\ldots$, $0.1$, $0.2$, $\ldots$, $0.9$, $1$, $2$, \ldots $9$, $10$, $20$, $30$, $40$, $50$ with the points $(b^*, v_{b^*}(b^*))$ indicated by circles, along with the classical case $\bar{v}_{b^\dagger}$ as in \cite{AvrPalPis} with the point $(b^\dagger, \bar{v}_{b^\dagger}(b^\dagger))$ indicated by the square.
} \label{fig_sensitivity}
\end{center}
\end{figure}
\appendix

	\section*{Acknowledgements}

K.\ Noba and K.\ Yano were supported by JSPS-MAEDI Sakura program.
K. Yano was supported by MEXT KAKENHI grant no.'s 26800058, 15H03624
and 16KT0020.
%K. Yano was supported by KAKENHI 26800058 and partially by KAKENHI
%15H03624 and KAKENHI 16KT0020. 
J. L. P\'erez  was  supported  by  CONACYT,  project  no.\ 241195.
K. Yamazaki was supported by MEXT KAKENHI grant no.\  	17K05377.

\end{document}